\newtheorem{theorem}{Theorem}[section]
\newtheorem{definition}{Definition}[section]
\newtheorem{lemma}{Lemma}[section]
\newtheorem{proposition}{Proposition}[section]
\newtheorem{remark}{Remark}[section]
\renewcommand{\S}{\mathsf{S}}
\newcommand{\lam}{\lambda}
\newcommand{\blue}{\textcolor{blue}}
\newcommand{\R}{\mathbb{R}}
\newcommand{\pa}{\partial}
\newcommand{\na}{\nabla}
\newcommand{\eps}{\varepsilon}
\newcommand{\uu}{\mathbf{u}}
\renewcommand{\L}{\mathscr{L}}
\newcommand{\Aop}{\mathscr{A}}
\newcommand{\Lop}{\mathscr{L}}
\newcommand{\Rop}{\mathscr{R}}
\newcommand{\intO}{\int_{\Omega}}
\newcommand{\vp}{\varphi}
\newcommand{\vt}{\vartheta}
\newcommand{\intQT}{\int_0^T\int_{\Omega}}
\newcommand{\LO}[1]{L^{#1}(\Omega)}
\newcommand{\LQ}[1]{L^{\infty}(Q_T)}
\newcommand{\bra}[1]{\left(#1\right)}
\renewcommand{\P}{\mathbb{P}}
\newcommand{\De}{D^\eps}
\newcommand{\lame}{\lambda^{\eps}}
\newcommand{\ue}{u^{\eps}}
\newcommand{\E}{\mathbb{E}}
\newcommand{\sbra}[1]{\left[#1\right]}
\newcommand{\red}{\textcolor{red}}
\newcommand{\bao}[1]{{\color{blue}#1}}
\newcommand{\wh}{\widehat}
\newcommand{\RR}{\mathbf{R}}
\author[1]{Malcolm Egan}
\author[2]{Bao Quoc Tang}
\affil[1]{\small Univ Lyon, INSA Lyon, INRIA, Lyon, France\\
{\it email} \href{mailto:malcom.egan@inria.fr}{malcom.egan@inria.fr}}
\affil[2]{\small University of Graz, Graz, Austria\\ {\it email} \href{mailto:quoc.tang@uni-graz.at}{quoc.tang@uni-graz.at}}
\begin{document}
	
\title{Macroscopic limit for stochastic chemical reactions involving diffusion and spatial heterogeneity}
\date{}

\maketitle

\begin{abstract}
    To model bio-chemical reaction systems with diffusion one can either use stochastic, microscopic reaction-diffusion master equations or deterministic, macroscopic reaction-diffusion system. The connection between these two models is not only theoretically important but also plays an essential role in applications. This paper considers the macroscopic limits of the chemical reaction-diffusion master equation for first-order chemical reaction systems in highly heterogeneous environments. More precisely, the diffusion coefficients as well as the reaction rates are spatially inhomogeneous and the reaction rates may also be discontinuous. By carefully discretizing these heterogeneities within a reaction-diffusion master equation model, we show that in the limit we recover the macroscopic reaction-diffusion system with inhomogeneous diffusion and reaction rates.
\end{abstract}

\tableofcontents

\section{Introduction}
Modeling chemical reactions and diffusion have been investigated extensively in the literature due to the frequent appearance of these processes in nature. There are different levels of these models. At the microscopic scale, one can use the stochastic reaction-diffusion master equations (RDME), while at the macroscopic, the reaction-diffusion systems (RDS) are more common. The RDME is useful for simulation as well as when molecular stochasticity due to small quantities is taken into account, while the latter is more convenient to investigate qualitative behaviour of the system. The connection between these models have been investigated since the seventies, with the pioneering works of Kurtz \cite{kurtz1970solutions,Kurtz1971limit} where this issue was studied for the homogeneous (or well-stirred) case, i.e. only chemical reactions are taken into consideration. In these two works, the Markov jump process, which describes chemical reactions in the microscopic level, is shown to converge in probability to the corresponding differential system, which models the same reactions at the macroscopic level. When a diffusion process is also present, attempts to connect microscopic RDME and  macroscopic RDS have been made in \cite{brenig1980stochastic,nicolis1977self}, and a first rigorous proof was provided in \cite{arnold1980deterministic}. In \cite{arnold1980deterministic}, the authors studied the reaction and diffusion processes of {\it a single chemical}, and proved the convergence in the sense of probability of the corresponding Markov jump process to the reaction-diffusion equation. This result was later improved by \cite{kotelenez1986law,kotelenez1986linear,kotelenez1988high,blount1991comparison,blount1993limit} where, among other things, central limit theorems were established for local diffusion, and recently by \cite{watanabe2022comparison} where the diffusion is assumed to be non-local.

\medskip
We highlight that most of existing works have dealt with scalar reaction-diffusion equations. An exception is the recent work \cite{desvillettes2022exponential} where the authors showed exponential convergence to equilibrium for systems with with degenerate reaction rataes; i.e. reactions only occur in a subdomain of positive measure. Our current work extends the literature by considering general first-order, or unimolecular, reaction systems with high levels of heterogeneity, for which we show convergence in probability of microscopic RDME to corresponding RDS. More precisely, we consider chemical reaction networks with an arbitrary number of chemicals, the diffusion and reaction coefficients are spatially inhomogeneous, and in particular, the reaction coefficients can be discontinuous.

\medskip
Another motivation of this paper stems from applications in molecular communication, an emerging field spanning telecommunication, bio- and nano-technologies. The core idea of molecular communication is to send and receive messages using chemical signals mimicking how cells communicate, which was first proposed in \cite{nakano2005molecular, nakano2013molecular}. Understanding dynamics of bio-chemical systems is essential for designing molecular communication schemes. In our recent work \cite{akdeniz2020equilibrium}, we proposed a novel detection scheme called ``equilibrium signaling'' which is robust to the geometry and heterogeneity of considered system. In that work, the microscopic RDME was used for computational purpose while the macroscopic PDE provided quantitative dynamics of the bio-chemical system, and the connection between these models has been assumed therein. Our current work provides rigorous convergence from RDME to PDE for general first-order reaction systems, of which the problem in \cite{akdeniz2020equilibrium} is a special case. 

\medskip
Finally, we remark that the study of RDME for higher order reactions is sparse in the literature. One particular reason for this is that, in the rescaling limit of RDME the biomolecular reactions are lost in two or more dimensions, see \cite{isaacson2009reaction}. Attempts to circumvent this problem includes the convergent RDME, see \cite{isaacson2013convergent}. A general approach has been proposed in \cite{montefusco2023route} where the gradient flow structure of detailed balanced reaction systems has been investigated in a formal setting.
Let us also mention related works considering the limit from system of interacting particles to reaction-diffusion equations \cite{lim2020quantitative} or integral-differential equations \cite{isaacson2022mean}.

\medskip
{\bf Our paper is organized as follows:} in the next section, in order to clearly present ideas and methods, we start with a reversible system of two chemicals in heterogeneous medium in one dimension. For this system, we first consider in Subsection \ref{subsec:1} its macroscopic PDE description and basic properties such as global existence, boundedness and large time behaviour of solutions. The corresponding microscopic RDMS equation is written down in Subsection \ref{subsec:2}. The main result concerning the limit from RDME to PDE model is stated in Subsection \ref{subsec:3} together with its proof. These results are then extended to general first-order reaction networks in Section \ref{sec:general_system} in arbitrary dimension.

\medskip
{\bf Notation}: In the this paper, we will use the following notation:
\begin{itemize}
    \item For $1\le p\le \infty$, $L^p(\Omega)$ denotes the usual Lebesgue space with the associated norm
    \begin{equation*}
        \|u\|_{L^p(\Omega)} = \bra{\int_{\Omega}|u(x)|^pdx}^{1/p} \quad \text{ if } p<\infty,
    \end{equation*}
    and
    \begin{equation*}
        \|u\|_{L^\infty(\Omega)} = \underset{x\in\Omega}{\text{ess}\sup}|u(x)|, \quad \text{ otherwise}.
    \end{equation*}
    \item The Sobolev space $H_0^1(\Omega)$ is defined as
    \begin{equation*}
        H_0^1(\Omega) = \left\{u \in L^2(\Omega)\,:\, \na u\in L^2(\Omega) \text{ and } u|_{\pa\Omega} = 0\right\}
    \end{equation*}
    where the gradient is understood in the distributional sense and the value on the boundary is understood in the trace sense.
    \item For a function $u: \Omega \to \R$, $\Omega\subset\R^n$, the gradient of is defined by
    \begin{equation*}
        \nabla u = \bra{\frac{\pa u}{\pa x_1}, \cdots, \frac{\pa u}{\pa x_n}}.
    \end{equation*}
    For a vector field $F = (F_1, \ldots, F_n): \Omega \to \R^n$, the divergence of $F$ is defined by
    \begin{equation*}
        \na \cdot F = \sum_{i=1}^{n}\frac{\pa F_i}{\pa x_i}.
    \end{equation*}
    \item Consider the random variable $X$ defined on the measurable space $(\mathcal{X},\mathcal{B})$ with underlying probability space $(\Omega_X,\mathcal{F},\mathbb{P})$. The probability distribution of $X$ is denoted by $P$ and the probability of an event $\{X \in B\},~B \in \mathcal{B}$ is denoted by $\mathbb{P}(X \in B)$.  
\end{itemize}

\section{The case of two chemicals}\label{sec:two_chemicals}

\subsection{Macroscopic PDE Model}\label{subsec:1}

Consider the following reversible reaction of two chemical species $\S_1$ and $\S_2$
\begin{equation}\label{reaction}
    \S_1 \underset{\lam_1}{\overset{\lam_2}{\leftrightarrows}} \S_2
\end{equation}
where $\lam_1$ and $\lam_2$ are reaction rates. The reactions take place in a bounded vessel $\Omega\subset\R^n$, with Lipschitz boundary $\pa\Omega$. In general, the reaction rates $\lam_1$ and $\lam_2$ are spatially inhomogeneous; that is, the reaction rate depends on the spatial position in the medium with $\lam_i:\Omega \to [0,\infty)$. 

Denote by $\uu_i(x,t)$ the concentration of $\S_i$ at time $t>0$ and position $x\in\Omega$. The chemicals species diffuse through the vessel with the diffusive fluxes 
\begin{equation}\label{diff-flux}
    J_i(x,t) = D_i(x)\nabla \uu_i(x,t), \quad i=1,2.
\end{equation}

By applying the law of mass action, with the diffusive flux given in \eqref{diff-flux}, we obtain the following \textit{macroscopic reaction-diffusion system}
\begin{equation}\label{RS-system}
    \begin{cases}
        \pa_t \uu_1 - \na\cdot(D_1(x)\na \uu_1) = -\lam_1(x)\uu_1 + \lam_2(x)\uu_2, &x\in\Omega, t>0,\\
        \pa_t \uu_2 - \na\cdot(D_2(x)\na \uu_2) = +\lam_1(x)\uu_1 - \lam_2(x)\uu_2, &x\in\Omega, t>0,\\
        \uu_1(x,t) = \uu_2(x,t) = 0, &x\in\pa\Omega, t>0,\\
        \uu_i(x,0) = \uu_{i,0}(x), &x\in\Omega, \; i=1,2.
    \end{cases}
\end{equation}
Here, $\nabla\cdot F$ is the divergence of the vector field $F$.


\begin{lemma}\label{lem:Poincare}
    Assume that $\Omega\subset\R^n$ be a bounded domain with Lipschitz boundary $\pa\Omega$. Then there exists a constant $C_P>0$ such that
    \begin{equation*}
        \|\nabla u\|_{L^2(\Omega)}^2 \ge C_P\|u\|_{L^2(\Omega)}^2 \quad \text{ for all } \quad u\in H_0^1(\Omega).
    \end{equation*}
\end{lemma}

\begin{theorem}[Global bounded weak solution of \eqref{RS-system}]\label{thm:weak_sol}
    Assume the following for each $i\in \{1,2\}$
    \begin{enumerate}[ref=D,label=(D)]
    \item\label{D} $D_i: \Omega \to \R$ satisfies $0< D_* \le D_i(x) \le D^* < +\infty$ for a.e. $x\in\Omega$, and
    \end{enumerate}
    \begin{enumerate}[ref=R,label=(R)]
        \item\label{R}  $\lambda_i \in L^{\infty}_+(\Omega)$, i.e. $\lambda_i\in L^{\infty}(\Omega)$ and $\lambda_i \ge 0$ a.e. in $\Omega$.
    \end{enumerate}
    Then, for any non-negative initial data $\uu_0 = (\uu_{i,0})\in L^{\infty}_{+}(\Omega)^2$, there exists a unique global non-negative, weak solution to \eqref{RS-system} (in the sense of \eqref{def_weak_sol}) which satisfies the mass dissipation
    \begin{equation}\label{mass_dissipation}
        \int_{\Omega}(\uu_1(t)+\uu_2(t))dx \le \int_{\Omega}(\uu_{1,0} + \uu_{2,0})dx, \; \forall t>0.
    \end{equation}
    Moreover, the solution is bounded uniformly in time, i.e. there is $\rho>0$ depending on initial data, $D_*$, $D^*$, $\|\lambda_i\|_{L^\infty(\Omega)}$ such that
    \begin{equation}\label{uniform_bound}
        \sup_{t\ge 0}(\|\uu_1(t)\|_{L^{\infty}(\Omega)} + \|\uu_2(t)\|_{L^{\infty}(\Omega)}) \le \overline{\rho}.
    \end{equation}
    
    In addition, if $D^1, D^2\in C^1(\Omega)\cap C(\overline{\Omega})$, the solution to \eqref{RS-system} can be represented by
    \begin{equation}\label{mild_solution}
        \uu(t) = T(t)\uu_0 + \int_{0}^tT(t-s)\mathbf{R}(\uu(s))ds
    \end{equation}
    where $$\uu_0 = (\uu_{1,0}, \uu_{2,0}), \quad \mathbf{R}(\uu) = \begin{pmatrix}
        -\lam_1(x)\uu_1 + \lam_2(x)\uu_2\\
        \lam_1(x)\uu_1 - \lam_2(x)\uu_2
    \end{pmatrix}$$ and the semigroup $\{T(t) = e^{\L t}\}_{t\ge 0}$ is generated by the operator
    \begin{equation}\label{def:L}
        \L \uu = 
        \begin{pmatrix}
            -\na\cdot(D_1(x)\uu_1) & 0\\
            0 & -\na\cdot(D_2(x)\uu_2)
        \end{pmatrix}
    \end{equation}
    
    Furthermore, if $\lambda_1(x) = \delta \lambda_2(x)$, $x\in \Omega$, for some positive constant $\delta$, then the solution to \eqref{RS-system} decays exponentially to zero, i.e. for any $p \in [2,\infty)$ there are $\alpha_p, C_p>0$ such that 
    \begin{equation}\label{exponential_decay}
        \|\uu_1(t)\|_{L^{p}(\Omega)} + \|\uu_2(t)\|_{L^{p}(\Omega)} \le C_pe^{-\alpha_p t}, \quad \forall t\ge 0.
    \end{equation}
\end{theorem}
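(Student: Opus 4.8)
The plan is to regard \eqref{RS-system} as a \emph{linear} parabolic system $\pa_t\uu=\L\uu+\RR(\uu)$, with $\L$ the diagonal Dirichlet diffusion operator of \eqref{def:L} and $\RR$ the bounded linear reaction operator, and to prove the assertions in the order stated. Under \ref{D} each scalar operator $v\mapsto-\na\cdot(D_i(x)\na v)$ with homogeneous Dirichlet boundary condition is self-adjoint, uniformly elliptic and has compact resolvent, so it generates an analytic $C_0$-semigroup on $L^2(\Omega)$; under \ref{R} the map $\RR$ is bounded on $L^2(\Omega)^2$, hence the generator $\L+\RR$ of \eqref{RS-system} generates an analytic semigroup and standard linear semigroup (or Galerkin) theory yields a unique global weak solution $\uu\in C([0,\infty);L^2(\Omega)^2)\cap L^2_{\mathrm{loc}}(0,\infty;H_0^1(\Omega)^2)$ — no smallness is needed precisely because $\RR$ is linear. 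Non-negativity I would obtain from the quasi-positivity of $\RR$ ($R_1\ge0$ whenever $\uu_1\le0\le\uu_2$, and symmetrically for $R_2$): testing the $i$-th equation with $-(\uu_i)_-\in H_0^1(\Omega)$, summing over $i$ and applying Gr\"onwall's inequality forces $\|(\uu_1)_-(t)\|_{L^2(\Omega)}^2+\|(\uu_2)_-(t)\|_{L^2(\Omega)}^2\equiv0$. For \eqref{mass_dissipation} I would add the first two equations of \eqref{RS-system}, so that $\uu_1+\uu_2$ solves $\pa_t(\uu_1+\uu_2)=\na\cdot(D_1\na\uu_1)+\na\cdot(D_2\na\uu_2)$; since for $t>0$ the solution lies in the domain of $\L$ and the Dirichlet diffusion semigroups are sub-Markovian, $\frac{d}{dt}\intO(\uu_1+\uu_2)\,dx=\sum_i\langle\na\cdot(D_i\na\uu_i),1\rangle\le0$, the loss being the absorbing flux through $\pa\Omega$; in particular $\sup_{t\ge0}\|\uu_i(t)\|_{L^1(\Omega)}\le\|\uu_{1,0}+\uu_{2,0}\|_{L^1(\Omega)}$.

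The main work — and the step I expect to be the only genuine obstacle — is the time-uniform $L^\infty$ bound \eqref{uniform_bound}. Writing Duhamel's formula $\uu_i(t)=T_i(t)\uu_{i,0}+\int_0^tT_i(t-s)R_i(\uu(s))\,ds$ with $T_i$ the Dirichlet heat semigroup of $-\na\cdot(D_i(x)\na\,\cdot)$, I would use the smoothing/decay estimate $\|T_i(\tau)\|_{L^r(\Omega)\to L^q(\Omega)}\le C\min\bra{\tau^{-\frac n2(\frac1r-\frac1q)},\,e^{-\mu\tau}}$, valid for uniformly elliptic operators with bounded measurable coefficients, with spectral gap $\mu>0$ (bounded below by $D_*C_P$ via Lemma \ref{lem:Poincare}), together with the pointwise bound $|R_i(\uu)|\le\bra{\|\lam_1\|_{L^\infty(\Omega)}+\|\lam_2\|_{L^\infty(\Omega)}}(\uu_1+\uu_2)$. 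Since $\int_0^\infty\|T_i(\tau)\|_{L^r(\Omega)\to L^q(\Omega)}\,d\tau<\infty$ whenever $\frac n2(\frac1r-\frac1q)<1$, a time-uniform bound in $L^r(\Omega)$ propagates to a time-uniform bound in $L^q(\Omega)$; starting from the $L^1$-bound above and iterating finitely many times reaches $L^\infty(\Omega)$, with $\overline\rho$ depending only on the initial data, $D_*$, $D^*$ and $\|\lam_i\|_{L^\infty(\Omega)}$. This bootstrap is unavoidable because a crude semigroup estimate only gives $\|\uu_i(t)\|_{L^\infty}\lesssim e^{(\Lambda-\mu)t}$ and breaks down once the reaction rate $\Lambda=\max_i\|\lam_i\|_{L^\infty(\Omega)}$ exceeds the diffusive gap — which is permitted here, since the rates may even vanish on part of $\Omega$ — so one must genuinely combine the $L^1$ mass bound with parabolic smoothing.

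For the representation \eqref{mild_solution}, the extra hypothesis $D_i\in C^1(\Omega)\cap C(\overline{\Omega})$ makes the principal coefficients continuous, so the Dirichlet realization $\L_i$ of $-\na\cdot(D_i(x)\na\,\cdot)$ generates an analytic semigroup $e^{\L_it}$ on $L^p(\Omega)$ for every $p\in(1,\infty)$; with $T(t)=\mathrm{diag}(e^{\L_1t},e^{\L_2t})$ the variation-of-constants formula applied to $\pa_t\uu=\L\uu+\RR(\uu)$ gives \eqref{mild_solution}, and since the right-hand side of \eqref{mild_solution} is a weak solution it must coincide with $\uu$ by the uniqueness already established. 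Finally, assuming $\lam_1=\delta\lam_2$, I would test the first equation with $\delta\uu_1$ and the second with $\uu_2$ and add; the reaction contribution then collapses to $-\intO\lam_2(\delta\uu_1-\uu_2)^2\,dx\le0$ — the weights $(\delta,1)$ are the unique choice rendering the quadratic reaction form negative semidefinite, which is available precisely because $\lam_1/\lam_2$ is constant — so with $E(t):=\tfrac12\intO(\delta\uu_1^2+\uu_2^2)\,dx$ Lemma \ref{lem:Poincare} yields
\[
\frac{d}{dt}E(t)\le-D_*\bra{\delta\|\na\uu_1\|_{L^2(\Omega)}^2+\|\na\uu_2\|_{L^2(\Omega)}^2}\le-2D_*C_P\,E(t),
\]
hence $\|\uu_1(t)\|_{L^2(\Omega)}+\|\uu_2(t)\|_{L^2(\Omega)}\le Ce^{-D_*C_Pt}$. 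For $p\in(2,\infty)$ I would then interpolate this $L^2$-decay against the uniform $L^\infty$-bound \eqref{uniform_bound}, $\|\uu_i(t)\|_{L^p(\Omega)}\le\|\uu_i(t)\|_{L^2(\Omega)}^{2/p}\,\|\uu_i(t)\|_{L^\infty(\Omega)}^{1-2/p}\le C_pe^{-\alpha_pt}$ with $\alpha_p=2D_*C_P/p$, which is exactly \eqref{exponential_decay}.
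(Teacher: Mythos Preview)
Your proposal is correct and, for the exponential decay and the mild-solution representation, essentially coincides with the paper's argument: the paper also tests the two equations with a $\delta$-weighted pair, obtains a non-positive reaction term, applies Poincar\'e (Lemma~\ref{lem:Poincare}) to get $L^2$-decay, and then interpolates against the uniform $L^\infty$-bound to reach $L^p$; and it likewise invokes the analytic semigroup generated by $\L$ (citing \cite{zheng2004nonlinear}) to justify \eqref{mild_solution}. Your weights $(\delta,1)$ produce the clean sign $-\intO\lam_2(\delta\uu_1-\uu_2)^2\,dx$, which is in fact the correct detailed-balance choice; the paper uses the reversed weights $(1,\delta)$ and writes the reaction term as $-\intO\delta\lam_2(\uu_1-\uu_2)^2\,dx$, but the mechanism is identical.

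The one genuine difference is in the existence and uniform-$L^\infty$ boundedness step. The paper does not argue this at all: it simply observes that the right-hand sides of \eqref{RS-system} sum to zero and invokes \cite[Theorem~1.1]{fitzgibbon2021reaction}, a black-box result for reaction--diffusion systems with discontinuous diffusion and ``mass control'' that delivers global weak solutions together with the uniform bound \eqref{uniform_bound}. Your route---Duhamel plus $L^r\!\to\!L^q$ smoothing/decay of the Dirichlet semigroups with bounded measurable coefficients, bootstrapped from the $L^1$ mass bound---is a self-contained substitute that makes explicit where the uniformity in time comes from (the spectral gap $\mu\ge D_*C_P$ makes $\int_0^\infty\|T_i(\tau)\|_{L^r\to L^q}\,d\tau<\infty$). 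The price is that you must appeal to Aronson-type kernel bounds for nonsmooth coefficients to justify the $L^r\!\to\!L^q$ estimate; the paper avoids this by citing the external theorem.
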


\begin{proof}
    A weak solution to \eqref{RS-system} on $(0,T)$, $T>0$, is a pair of functions
    \begin{equation*}
        (\uu_1, \uu_2) \in C([0,T];L^2(\Omega))^{2} \cap L^2(0,T;H^1_0(\Omega))^2
    \end{equation*}
    with
    \begin{equation*}
        \pa_t \uu_i \in L^2(0,T;H^{-1}(\Omega)), i=1,2,
    \end{equation*}
    with $H^{-1}(\Omega)$ being the dual space of $H_0^1(\Omega)$,
    such that for each $i\in \{1,2\}$
    \begin{equation}\label{def_weak_sol}
        \begin{aligned}
        \int_{0}^T\langle \pa_t\uu_i, \varphi\rangle_{H^{-1}(\Omega),H_0^1(\Omega)} dt + \int_0^T\int_{\Omega}D_i(x)\nabla \uu_i \nabla \varphi dxdt\\
        = \int_{0}^T\int_{\Omega}(-\lambda_i \uu_i + \lambda_{3-i}\uu_{3-i})\varphi dxdt
        \end{aligned}
    \end{equation}
    for all test functions $\varphi \in L^2(0,T;H^1(\Omega))$.

    \medskip
    It is noted that the sum of the right-hand side of the equations in \eqref{RS-system} equals to zero.
    Therefore, the global existence and uniform-in-time boundedness of a unique non-negative, weak solution to \eqref{RS-system} follows immediately from \cite[Theorem 1.1]{fitzgibbon2021reaction}.

    \medskip
    It remains to show the decay of the solution in case $\lambda_1(x) = \delta\lambda_2(x)$. Direct computations give
    \begin{equation*}
        \begin{aligned}
            \frac 12\frac{\mathrm{d}}{\mathrm{d}t}\bra{\|\uu_1\|_{\LO{2}}^2 + \delta\|\uu_2\|_{\LO{2}}^2} + \intO \bra{D^1|\na \uu_1|^2 + D^2|\na \uu_2|^2}dx\\
            = -\intO \delta\lambda_2(x)(\uu_1 - \uu_2)^2dx \le 0.
        \end{aligned}
    \end{equation*}
    Thanks to the homogeneous Dirichlet boundary conditions and the Poincar\'e inequality in Lemma \ref{lem:Poincare}, it follows that there is $\beta>0$ satisfying
    \begin{equation*}
        \frac{\mathrm{d}}{\mathrm{d}t}\bra{\|\uu_1\|_{\LO{2}}^2 + \delta\|\uu_2\|_{\LO{2}}^2} + \beta \bra{\|\uu_1\|_{\LO{2}}^2 + \delta\|\uu_2\|_{\LO{2}}^2} \le 0,
    \end{equation*}
    and consequently
    \begin{equation*}
        \|\uu_1(t)\|_{\LO{2}}^2 + \delta\|\uu_2(t)\|_{\LO{2}}^2 \le e^{-\beta t}\bra{\|\uu_{1,0}\|_{\LO{2}}^2 + \|\uu_{2,0}\|_{\LO{2}}^2}.
    \end{equation*}
    Now for any $2<p<\infty$, we have
    \begin{equation*}
        \|\uu_1(t)\|_{L^p(\Omega)}^p = \int_{\Omega}|\uu_1(t)|^pdx \le \|\uu_1(t)\|_{L^{\infty}(\Omega)}^{p-2}\|\uu_1(t)\|_{L^2(\Omega)}^2 \le C_0e^{-\beta t}
    \end{equation*}
    and thus 
    \begin{equation*}
        \|\uu_1(t)\|_{L^p(\Omega)} \le C_0^{1/p}e^{-(\beta/p)t} \quad \forall t>0.
    \end{equation*}
    The proof for $\uu_2$ follows the same way.

    \medskip
    When the diffusion coefficients $D_1, D_2$ are in $C^1(\Omega)\cap C(\overline{\Omega})$, it is known, see e.g. \cite{zheng2004nonlinear} that the operator $\L$ defined in \eqref{def:L} generates an analytic contraction semigroup $\{T(t) = e^{\L t}\}_{t\ge 0}$, using which the solution to \eqref{RS-system} can be written as \eqref{mild_solution}. Note that this solution is also a weak solution and therefore enjoys the uniform-in-time bounds \eqref{uniform_bound} as well as the exponential decay \eqref{exponential_decay}.
\end{proof}
Since the system is linear, the global existence is not surprising, and one can get a bound in $L^{\infty}(\Omega)$-norm \text{which might depend on time} (typically exponential). The uniform-in-time bound \eqref{uniform_bound}, which is a consequent of the dissipation of mass \eqref{mass_dissipation}, plays an important role in defining the stopping process in the sequel (see \eqref{stopping_process}).

\subsection{Reaction-diffusion master equations}\label{subsec:2}
Let $\Omega = (0,1)$, and consider a volume in one dimension, i.e. $V = (0,L)$ for some $L>0$. The case of a cube $\Omega = (0,1)^n$ and $V = (0,L)^n$ in $\R^n$ will be discussed in Section \ref{sec:general_system} for general systems. At the microscopic level, the system \eqref{reaction} with diffusion can be modeled via a continuous-time Markov jump process \cite{arnold1980deterministic,kotelenez1986law}. Let $N\in \mathbb N$ and consider a uniform partition of $V$ into $N$ cells of equal size $w = L/N$ and define the $j$-th cell by $(x_{j-1},x_{j}] = ((j-1)w, jw]$, $j=1,\ldots, N$. It is remarked that this partition is not a discretization of the domain $\Omega$ in the macroscopic model. In fact, later on, it will be assumed that the volume $V$, as well as each cell, will "blow up", i.e. become unbounded (see \eqref{assumption}). To account for the boundary conditions detailed in the sequel, the volume is extended to an interval of length $L + 2w$ consisting of $N + 2$ cells of length $w$. The number of molecules of each species $l\in \{1,2\}$ in cell $j = 0,\ldots,N+1$ at time $t$ is denoted by $\widetilde  X^l_j(t)$. We assume that the number of molecules for the cells $j = 0$ and $j = N+1$ are always zero, which corresponds to the homogeneous Dirichlet boundary condition in (\ref{RS-system}). The total state of the system is denoted by $\widetilde X(t)$, which forms a continuous-time Markov jump process on $\mathbb{N}^{2(N+2)}$.

\medskip
Suppose that at time $t$, $\widetilde X_j^l(t) = k_{j}^l$. Let $m = (m^1, m^2) \in \mathcal \mathbb{N}^{2(N+2)}$  and denote the $j$-th unit vector in $\mathbb R^{N+2}$ by $e_j$. The transition rates for the process $\widetilde X(t)$ are given by 
\begin{align}\label{eq:trans_rates}
&q_{k,k+m} = \begin{cases}
\lambda_j^l k_j^l,~&m^l = -e_j,~m^{3-l} = e_j,~j = 1,\ldots,N,~l = 1,2,\\
\lambda_j^{3-l} k_j^{3-l},~&m^l = e_j,~m^{3-l} = -e_j,~j = 1,\ldots,N,~l = 1,2,
\end{cases}\\
&q_{k,k+m} = \begin{cases}
D_{j}^l N^2k_j^l,~&m^l = -e_j + e_{j+ 1},~j = 1,\ldots,N,~l = 1,2,\\
D_{j}^l N^2k_j^l,~&m^l = -e_j + e_{j- 1},~j = 1,\ldots,N,~l = 1,2,\\
D_{j+1}^l N^2 k_{j + 1}^l,~&m^l = e_j - e_{j+1},~j = 1,\ldots,N-1,~l = 1,2,\\
D_{j-1}^l N^2 k_{j-1}^l,~&m^l = e_j - e_{j-1},~j = 2,\ldots,N,~l = 1,2.
\end{cases}
\end{align}
and $q_{k,k+m} = 0$ otherwise. Here, $D_j^l$ and $\lambda_j^l$ denote the diffusion coefficient and reaction rate constant in the $j$-th cell for species $l$, which are calculated from the functions $D_l$ and $\lambda_l$ via
\begin{equation}\label{approx_Dlambda}
    D_j^l = {w^{-1}}\int_{x_{j-1}}^{x_j}D_l(x/V)dx, \quad \lambda^l_j = {w^{-1}}\int_{x_{j-1}}^{x_j}\lambda_l(x/V)dx.
\end{equation}
It is remarked that we only need integrability of $D_l$ and $\lambda_l$ for these voxel diffusion and reaction rates to be meaningful. 

\medskip
The evolution of $\widetilde X(t)$ is 
described by the \textit{reaction-diffusion master equation} (RDME); i.e., the Kolmogorov forward equation for the Markov jump process described by the transition rates in (\ref{eq:trans_rates}). Let $k_0 \in \mathbb{N}^{2(N+2)}$, $P(k,t) = \mathrm{Pr}(\widetilde X(t) = k|\widetilde X(0) = k_0),~k \in \mathbb{N}^{2(N+2)}$ and $e_j^l = e_{(l-1)N + j} \in \mathbb{N}^{2(N+2)}$. The RDME is then given by 
\begin{align}
	\frac{\mathrm{d}}{\mathrm{d}t} P(k,t) &= \sum_{l=1}^2 \sum_{j=1}^N \left(q_{k- e_j^l +e_{j+1}^l,k}P(k - e_j^l + e_{j+1}^l) - q_{k,k - e_j^l + e_{j+1}^l}P(k,t)\right.\notag\\
	&~~~ \left. + q_{k - e_j^l + e_{j-1}^l,k}P(k - e_j^l + e_{j-1}^l) - q_{k,k - e_j^l + e_{j-1}^l}P(k,t)\right)\notag\\
	&~~~ + \sum_{l=1}^2 \sum_{j=1}^N q_{k + e^{3-l}_j - e_j^l,k}P(k + e^{3-l}_j - e_j^l) - q_{k,k+e_j^{3-l} - e_j^l}P(k,t).
\end{align}
Recall that, for all $t > 0$, the boundary layers satisfy $\tilde{X}_j^l(t) = 0,~j = 0,N+1$ to enforce the zero boundary condition.


\medskip


We now define the concentration process $C(t)$ via
\begin{equation}
\begin{aligned}
&C_j^l(t) = \frac{\widetilde X_j^l(t)}{w},~j = 1,\ldots,N\\
&C_j^l(t) = 0,~j = 0,N-1\\
\end{aligned}
\end{equation}
where the last two conditions account for the zero boundary condition. 
The process $C(t)$ forms a continuous-time Markov jump process, a property inherited from the process $\widetilde X(t)$. 
Observe that
\begin{align}
q_{k,k+m} = wf(k/w,m),~k \in \mathbb{N}^{2(N+1)},
\end{align}
where $f:\mathbb{R}_+^{2(N+2)} \times \mathbb{N}^{2(N+1)} \rightarrow \mathbb{R}_+$ with
\begin{equation}\label{def_f}
\left\{
\begin{aligned}
&f(c,m) = \begin{cases}
\lambda_j^l c_j^l,~&m^l = -e_j,~m^{3-l} = e_j,~j = 1,\ldots,N,~l = 1,2,\\
\lambda_j^{3-l} c_j^{3-l},~&m^l = e_j,~m^{3-l} = -e_j,~j = 1,\ldots,N,~l = 1,2,
\end{cases}\\
&f(c,m) = \begin{cases}
D_{j}^l N^2c_j^l,~&m^l = -e_j + e_{j+ 1},~j = 1,\ldots,N,~l = 1,2,\\
D_{j}^l N^2c_j^l,~&m^l = -e_j + e_{j- 1},~j = 1,\ldots,N,~l = 1,2,\\
D_{j+1}^l N^2 c_{j + 1}^l,~&m^l = e_j - e_{j+1},~j = 1,\ldots,N,~l = 1,2,\\
D_{j-1}^l N^2 c_{j-1}^l,~&m^l = e_j - e_{j-1},~j = 1,\ldots,N,~l = 1,2.
\end{cases}\\
&f(c,m) = 0 \quad \text{ otherwise.}
\end{aligned}\right.
\end{equation}
It follows that $C(t)$ is a density dependent continuous Markov process \cite{Kurtz1971limit} with waiting time parameter 
\begin{align}\label{tau}
\tau(c) = w\sum_{m\in \mathbb N^{2(N+2)}} f(c,m)
\end{align}
and the jump distribution function given by
\begin{align}\label{sigma}
\sigma(c,m/w) = \frac{f(c,m)}{\sum_{m\in \mathbb N^{2(N+2)}} f(c,m)} = \frac{wf(c,m)}{\tau(c)}.
\end{align}
We remark that due to the definition \eqref{def_f}, the sum over all $m\in \mathbb{N}^{2(N+2)}$ in \eqref{tau} and \eqref{sigma} are indeed finite sums.

\subsection{The macroscopic limit}\label{subsec:3}

\medskip
To allow for a comparison between the microscopic and macroscopic models, we interpret $C(t)$ as a function on $[0,1]$. In particular, let 
\begin{align}
    u_l(x,t) = \sum_{j=1}^N C_j^l(t)\mathbf{1}\left\{x \in \left[\frac{(j-1)w}{V},\frac{jw}{V}\right)\right\},~l = 1,2.
\end{align}
The function $u(t)$ lies in a subspace $\mathcal{X}_N$, which is a subspace of $L_2[0,1]\times L_2[0,1]$ with inner product 
\begin{align}
    \langle u,v\rangle = \frac{w}{V} \sum_{l = 1}^2 \sum_{j=1}^N u_l\left(\frac{jw}{V}\right)v_l\left(\frac{jw}{V}\right) = \sum_{l = 1}^2 \int_0^1 u_l(x)v_l(x)\mathrm{d}x,~u,v \in \mathcal{X}_N.
\end{align}


Define $\tau_S$ as the first exit time of $u(t)$ from 
\begin{align}\label{stopping_process}
	S_{\rho} = \left\{u \in \mathcal{X}_N^+: \sup_{0 \leq x \leq 1} (u_1(x) + u_2(x)) \leq \rho\right\}
\end{align}
with arbitrary $\rho > \overline{\rho}$, where $\overline{\rho}$ is the uniform-in-time bound \eqref{uniform_bound} of solutions in Theorem \ref{thm:weak_sol}. The stopped process $\tilde{u}(t)$ is then defined as
\begin{align}\label{eq:tilde_x}
\tilde{u}(\cdot,t) = u(\cdot,\min\{t,\tau_{S_\rho}\}).
\end{align}
Let 
\begin{align}
    c_l(u) = \left(u_l\left(0\right),\ldots,u_l\left(\frac{(N-1)w}{V}\right)\right).
\end{align}
Then, $\tilde{u}(t)$ is a jump Markov process with
\begin{align}\label{new_stop_process}
\tilde{\tau}(u) = \tau(c(u))\mathbf{1}\left\{u \in \mathcal{S}_{\rho}\right\} \quad \text{ and } \quad
\tilde{\sigma}(u,m/w) = \left\{\begin{array}{lr}
\sigma(c(u),m/w), & u \in S_{\rho}\\
0, & u \not\in S_{\rho},
\end{array}\right.
\end{align}

The main result of the current work is the following theorem. 

\begin{theorem}\label{thrm:conv_smooth}
    Assume that diffusion coefficients $D_i\in C^1(\Omega)\cap C(\overline{\Omega})$, $i=1,2$, reaction rates $\lam_i\in L^{\infty}(\Omega)$, $i=1,2$, satisfy \eqref{D} and \eqref{R}. 
    Let $\tilde{u}(t)$ be as defined in (\ref{eq:tilde_x}) and $\uu(t)$ the bounded weak solution to (\ref{RS-system}) obtained in Theorem \ref{thm:weak_sol}. Suppose that 
\begin{enumerate}[ref=H\theenumi,label=(H\theenumi)]
    \item\label{H1} $\|\tilde{u}(0) - \uu(0)\| = 0$ in probability; and
    \item\label{H2} the following limit holds \begin{equation}\label{assumption}
    \dfrac{N^2}{w} \rightarrow 0 \quad \text{as}\quad  N,w \rightarrow \infty.
    \end{equation}
\end{enumerate}
Then for any $\delta>0$, and any $t>0$
\begin{align}
    \P\bra{\|\tilde{u}(t) - \uu(t)\| \ge \delta} \rightarrow 0 \quad \text{ as } N, w\to\infty.
\end{align}
\end{theorem}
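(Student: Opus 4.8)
## Proof Proposal

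The plan is to follow the classical strategy of Kurtz and Arnold for passing from density-dependent Markov jump processes to deterministic limits, adapted to the spatially discretized, heterogeneous setting. The key idea is to write both the stopped jump process $\tilde u(t)$ and the PDE solution $\uu(t)$ in mild (Duhamel) form with respect to a common discrete semigroup, and then estimate the difference using a Gronwall-type argument after controlling a martingale term. First I would introduce the discrete generator $\L_N$ acting on $\mathcal X_N$ that encodes the diffusive transition rates in \eqref{eq:trans_rates} (the $D_j^l N^2$ terms): this is a finite-dimensional approximation of the operator $\L$ in \eqref{def:L}, and the semigroup $T_N(t) = e^{\L_N t}$ is a contraction on $\mathcal X_N$ (or on the sup-norm) by the structure of the rates. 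One then decomposes the infinitesimal drift of $\tilde u(t)$, namely $F_N(u) := \sum_m (m/w)\,\tilde\tau(u)^{-1}\tilde\sigma(u,m/w)$ (equivalently $w\sum_m (m/w) f(c(u),m)$ restricted to $S_\rho$), into its diffusive part, which equals $\L_N \tilde u$ up to boundary-layer corrections, and its reaction part $\RR_N(\tilde u)$, which is the piecewise-constant interpolation of the voxel reaction terms $-\lambda_j^l c_j^l + \lambda_j^{3-l} c_j^{3-l}$.

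The second step is the martingale decomposition: by Dynkin's formula,
\begin{equation*}
\tilde u(t) = \tilde u(0) + \int_0^t \bra{\L_N \tilde u(s) + \RR_N(\tilde u(s))}\,ds + M_N(t) + E_N(t),
\end{equation*}
where $M_N(t)$ is a mean-zero martingale and $E_N(t)$ collects the boundary-layer discrepancies. Applying the variation-of-constants formula with $T_N$ gives
\begin{equation*}
\tilde u(t) = T_N(t)\tilde u(0) + \int_0^t T_N(t-s)\RR_N(\tilde u(s))\,ds + \int_0^t T_N(t-s)\,dM_N(s) + \tilde E_N(t).
\end{equation*}
Writing $\uu(t)$ via \eqref{mild_solution} with the continuous semigroup $T(t)$, I would subtract and estimate in three groups: (i) $\|T_N(t)\tilde u(0) - T(t)\uu(0)\|$, controlled by assumption \eqref{H1} together with a standard estimate $\|T_N(t)P_N\phi - T(t)\phi\| \to 0$ for smooth $\phi$, where $P_N$ is the projection onto $\mathcal X_N$ — this convergence of the discrete diffusion semigroup to the continuous one is where $D_i \in C^1(\Omega)\cap C(\overline\Omega)$ and the definition \eqref{approx_Dlambda} of $D_j^l$ enter; (ii) the reaction term, where $\|\RR_N(\tilde u) - \mathbf R(\uu)\| \le \|\RR_N(\tilde u) - \RR_N(\uu)\| + \|\RR_N(\uu) - \mathbf R(\uu)\|$, the first piece being Lipschitz in $\|\tilde u - \uu\|$ (uniformly, thanks to $\lambda_i \in L^\infty$ and the cutoff at $S_\rho$) and the second being a deterministic discretization error going to zero; (iii) the martingale term, bounded in $L^2$ via the quadratic variation, which by \eqref{sigma}–\eqref{tau} scales like $\tau(c)\cdot(m/w)^2 \sim \frac{1}{w}\bra{\lambda_j^l c_j^l + D_j^l N^2 c_j^l}$ summed over cells; on $S_\rho$ this is $O(N^2/w)$ per unit time, which vanishes precisely by hypothesis \eqref{H2}, i.e. \eqref{assumption}. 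The boundary-layer term $\tilde E_N(t)$ is handled similarly since it involves only $O(1)$ cells and the same scaling.

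The final step assembles these bounds into $\E\|\tilde u(t\wedge\tau_{S_\rho}) - \uu(t)\| \le \varepsilon_N + L\int_0^t \E\|\tilde u(s\wedge\tau_{S_\rho}) - \uu(s)\|\,ds$ with $\varepsilon_N \to 0$, so Gronwall gives $\E\|\tilde u(t\wedge\tau_{S_\rho}) - \uu(t)\| \to 0$, hence convergence in probability. A separate, short argument shows $\P(\tau_{S_\rho} \le t) \to 0$: since $\rho > \overline\rho$ and $\uu(t)$ satisfies the uniform bound \eqref{uniform_bound} with constant $\overline\rho$, the event $\{\tau_{S_\rho} \le t\}$ forces $\|\tilde u(s) - \uu(s)\|$ (in sup norm, upgraded from $L^2$ via the finite-dimensional equivalence of norms on $\mathcal X_N$, or more carefully by an $L^\infty$-variant of the estimate) to be at least $\rho - \overline\rho > 0$ at some $s \le t$, which has vanishing probability by the same Gronwall bound.

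The main obstacle I anticipate is step (i) combined with the sup-norm control needed for the stopping-time argument: the $L^2$-type Gronwall estimate is clean, but deducing that $\tau_{S_\rho}$ rarely triggers requires either an $L^\infty$ bound on the semigroup difference (delicate for merely $C^1$ diffusion coefficients with the finite-volume discretization \eqref{approx_Dlambda}) or a bootstrapping argument exploiting that $\mathcal X_N$ is finite-dimensional but with norm-equivalence constants depending on $N$. Reconciling the $N$-dependence of these constants with the rate $N^2/w \to 0$ is the technical heart of the matter; I expect one must track the discretization error in $\|T_N(t)P_N\phi - T(t)\phi\|$ quantitatively in terms of the modulus of continuity of $\nabla D_i$ and show it is dominated by the gain from \eqref{assumption}.
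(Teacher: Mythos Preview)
Your proposal follows the same Kurtz--Arnold strategy as the paper: decompose the stopped process into drift plus martingale, compare via the discrete semigroup $T_N(t)=e^{\L_N t}$ and the continuous semigroup $T(t)$, and close with Gronwall. The paper organizes the decomposition slightly differently. Rather than writing a stochastic convolution $\int_0^t T_N(t-s)\,dM_N(s)$ and bounding it through the quadratic variation, the paper keeps the martingale $\tilde z(t)=\tilde u(t)-\tilde u(0)-\int_0^t F(\tilde u(s))\,ds$ and introduces an auxiliary deterministic process $y(t)=T_N(t)\tilde u(0)+\int_0^t T_N(t-s)\RR_N(\tilde u(s))\,ds$. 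The diffusive contribution then becomes $\int_0^t \L_N(\tilde u(s)-y(s))\,ds=\int_0^t T_N(t-s)\L_N\tilde z(s)\,ds$, and the key analytic input is the bound $\|T_N(t)\L_N\|\le 1/t$ from the analyticity of $T_N$, which turns this into $\int_0^t \|\tilde z(t)-\tilde z(s)\|/(t-s)\,ds + 2\|\tilde z(t)\|$. This is then controlled in expectation using the same second-moment estimate $\E\|\tilde z(t)-\tilde z(s)\|^2\lesssim (t-s)N^2/w$ that you invoke. Your stochastic-convolution route is equivalent in effect and arguably cleaner; the paper's route avoids stochastic integration entirely and stays within elementary semigroup calculus, which is the trade-off.

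Two of your concerns are superfluous for the theorem as stated. First, no boundary-layer remainder $E_N(t)$ is needed: with the ghost cells $j=0,N+1$ held at zero, the drift $F(\tilde u)$ is \emph{exactly} $\L_N\tilde u+\RR_N(\tilde u)$ on $\mathcal X_N$, cf.\ \eqref{important_relation}. Second, and more importantly, your ``main obstacle'' --- showing $\P(\tau_{S_\rho}\le t)\to 0$ via a sup-norm bootstrap --- is not part of the theorem. The statement concerns the \emph{stopped} process $\tilde u(t)=u(t\wedge\tau_{S_\rho})$ directly, so once the $L^2$ Gronwall estimate gives $\|\tilde u(t)-\uu(t)\|\to 0$ in probability you are finished; there is no need to control the exit time, and hence no tension between the $L^2$ convergence and the $L^\infty$ definition of $S_\rho$. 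The quantitative tracking of the semigroup-approximation error against the rate $N^2/w$ that you anticipate is therefore unnecessary: qualitative convergence $\|T_N(t)\phi-T(t)\phi\|\to 0$ (cited from finite-difference theory for $C^1$ coefficients) suffices.
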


Theorem~\ref{thrm:conv_smooth} generalizes the result of \cite{arnold1980deterministic} to the case of reaction {\it systems} with {\it inhomogeneous} diffusion and reactions.

\medskip
The rest of this section is devoted to prove Theorem \ref{thrm:conv_smooth}. We start off with the behaviour of a martingale related to the space-time Markov jump process $\tilde{u}$. Then discretisation of the inhomogeneous diffusion and its corresponding semigroup are considered, which in turn will be used in comparing $\tilde{u}$ and $\uu$ in the last part. 

\subsubsection{An accompanying martingale}
Consider the operator on $\mathcal{X}_N$
\begin{align}\label{def_F}
F(u) = \sum_m mf(u,m),
\end{align}
which is viewed as a stochastic approximation of the average infinitesimal rate of change of $u$. We now establish some properties of
\begin{align}\label{split}
\tilde{z}(t) = \tilde{u}(t) - \tilde{u}(0) - \int_0^t F(\tilde{u}(s))\mathrm{d}s.
\end{align}


\begin{lemma}
The process \eqref{split}
is a martingale with respect to $P_u$ (the distribution induced by an initial value $u$). 
\end{lemma}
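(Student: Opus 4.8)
The statement is the Dynkin (martingale-problem) identity for the jump process $\tilde{u}$, so the plan is to run the standard finite-activity argument. First I would record the structural facts: $\tilde{u}$ is a pure-jump Markov process on the finite-dimensional space $\mathcal{X}_N$, with waiting-time parameter $\tilde{\tau}$ and jump kernel $\tilde{\sigma}(\cdot,m/w)$ as in \eqref{new_stop_process}, and $F$ is exactly its mean infinitesimal displacement: for $u\in S_{\rho}$ one has $\tilde{\tau}(u)\sum_m (m/w)\,\tilde{\sigma}(u,m/w)=\sum_m m\,f(u,m)=F(u)$, while for $u\notin S_{\rho}$ the stopped dynamics is frozen and this drift vanishes. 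Thus $\tilde{z}(t)$ is $\tilde{u}(t)$ minus the integral of its own infinitesimal drift, i.e. a Dynkin martingale. The goal is to show $\psi(v,t):=\E_v[\tilde{z}(t)]=0$ for all $v\in\mathcal{X}_N$ and $t\ge0$; once this is known, the martingale property follows from the Markov property, since for $r<t$ we get $\E_u\bra{\tilde{z}(t)-\tilde{z}(r)\mid\mathcal{F}_r}=\psi(\tilde{u}(r),t-r)=0$, and $\tilde{z}(t)$ is integrable because $\|\tilde{u}(t)\|$ is bounded (by $\rho$ on $S_{\rho}$, and constant afterwards) and $\|F(\tilde{u}(s))\|$ is bounded on the range of $\tilde{u}$.

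Next I would note that $\tilde{u}$ has \emph{finite activity}: on $S_{\rho}$ every concentration coordinate is at most $\rho$, so every transition rate in \eqref{eq:trans_rates} is bounded by some $\Lambda_N<\infty$ (there are finitely many jump types for fixed $N$), and outside $S_{\rho}$ all rates are zero. Hence the number of jumps of $\tilde{u}$ on $[0,t]$ is stochastically dominated by a $\mathrm{Poisson}(\Lambda_N t)$ variable; in particular $\tilde{u}$ does not explode and its successive jump times $0=T_0<T_1<T_2<\cdots$ satisfy $T_n\uparrow\infty$ almost surely. The same bounds give $\sup_{h\le t}\sup_{v}|\psi(v,h)|\le C_N(1+t)<\infty$, which I will use for the limiting step.

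The core is a one-step identity. Starting from $\tilde{u}(0)=v$, the first jump time is $T_1\sim\mathrm{Exp}(\beta)$ with $\beta=\tilde{\tau}(v)$, and $\tilde{u}(s)=v$ on $[0,T_1)$. Using $\E_v[t\wedge T_1]=(1-e^{-\beta t})/\beta$ and $\E_v\bra{(\tilde{u}(T_1)-v)\mathbf{1}\{T_1\le t\}}=(1-e^{-\beta t})\,\beta^{-1}F(v)$ (both sides read as $0$ when $\beta=0$), a direct computation gives $\E_v[\tilde{z}(t\wedge T_1)]=0$. Applying the strong Markov property at $T_1$ then yields $\psi(v,t)=\E_v\bra{\mathbf{1}\{T_1\le t\}\,\psi(\tilde{u}(T_1),t-T_1)}$, and iterating $n$ times gives $\psi(v,t)=\E_v\bra{\mathbf{1}\{T_n\le t\}\,\psi(\tilde{u}(T_n),t-T_n)}$. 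Since $|\psi|\le C_N(1+t)$ on the relevant range and $P_v(T_n\le t)\to0$ as $n\to\infty$, passing to the limit forces $\psi\equiv0$, which completes the proof.

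An equivalent, more abstract route would be to use finite-dimensionality of $\mathcal{X}_N$ and check that $\langle\tilde{z}(t),\varphi\rangle$ is a scalar martingale for each $\varphi$ in a basis of $\mathcal{X}_N$; this is the martingale characterization of the jump Markov process $\tilde{u}$ applied to the linear functional $u\mapsto\langle u,\varphi\rangle$, whose generator is $u\mapsto\tilde{\tau}(u)\sum_m\langle m/w,\varphi\rangle\,\tilde{\sigma}(u,m/w)=\langle F(u),\varphi\rangle$ (cf. \cite{Kurtz1971limit}). I do not expect a real obstacle here; the only point needing care in either approach is the bookkeeping around the stopping region $S_{\rho}$ — one must read $F$ along the path of $\tilde{u}$ as the drift of the \emph{stopped} process, so that $\tilde{z}$ compensates the dynamics only up to $\tau_{S_\rho}$, with both $\tilde{u}$ and its compensator constant thereafter. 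Everything else is the routine finite-activity Dynkin calculation.
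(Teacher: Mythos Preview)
Your argument is correct: you are essentially reproving the abstract result that, for a pure-jump Markov process with bounded total jump rate and integrable jump sizes, the process minus the integral of its mean infinitesimal drift is a martingale. Your finite-activity Dynkin iteration (one-step identity at the first jump, strong Markov, iterate, let $T_n\uparrow\infty$) is the standard way to do this, and your bookkeeping around the stopping set $S_\rho$ is on point.

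The paper takes a much shorter route: it simply checks the two hypotheses of Kurtz \cite[Proposition~2.1]{Kurtz1971limit}, namely $\sup_u\tilde\tau(u)<\infty$ and $\sup_u\tilde\tau(u)\sum_m(|m|/w)\,\tilde\sigma(u,m/w)<\infty$, by plugging in the explicit transition rates and the uniform bound $\rho$ on concentrations in $S_\rho$, and then cites that proposition. So the paper outsources exactly the argument you wrote out. What you gain is self-containment and an explicit reason the stopping is harmless; what the paper gains is brevity. Your closing paragraph (test against $\varphi$ in a basis of $\mathcal{X}_N$ and invoke \cite{Kurtz1971limit}) is in fact the paper's entire proof.
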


\begin{proof}
With $\tilde{\tau}$ defined in \eqref{new_stop_process} we observe that 
\begin{align}
\sup_u \tilde{\tau}(u) \leq  2Nw\rho\left(\|\lambda_1\|_{L^{\infty}(\Omega)} + \|\lambda_2\|_{L^{\infty}(\Omega)} + 4N^2 D^*\right) < \infty
\end{align}
and 
\begin{align}
\sup \tilde{\tau}(u) \sum_m \frac{|m|}{w} \tilde{\sigma}(u,m/w) &= \sup_{u \in S_{\rho}} \sum_m |m| f(u,m)\notag\\
&\leq 2\sqrt{2}N\rho\left(\|\lambda^1\|_{L^{\infty}(\Omega)} + \|\lambda^2\|_{L^{\infty}(\Omega)} + 4N^2 D^*\right) \notag\\
&< \infty.
\end{align}

By \cite[Proposition 2.1]{Kurtz1971limit}, the process \eqref{split} is then a martingale with respect to $P_u$. 
\end{proof}

Observe that for an arbitrary initial distribution,
\begin{align}
	\mathbb{E}[\tilde z(t)] = 0,~t \geq 0.
\end{align} 

\begin{proof}
	Since $f$ is the infinitesimal generator of the process $\tilde{u}$, the result follows by definition.
\end{proof}

\begin{lemma}\label{lem:scale}
	Under the assumption \eqref{H2}, it holds for all $\delta > 0$,
	\begin{equation*}
	    \mathbb{P}\left(\sup_{0\le s\le t}\|\tilde{z}(s)\| > \delta\right) \rightarrow 0, \quad\text{ as } N,w \rightarrow \infty.
	\end{equation*}
\end{lemma}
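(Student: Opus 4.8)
The plan is to bound the second moment $\mathbb{E}\|\tilde z(t)\|^2$ and then apply Doob's maximal inequality for martingales, exploiting the fact that $\tilde z$ is a martingale with values in the (finite-dimensional) Hilbert space $\mathcal{X}_N$. First I would compute the predictable quadratic variation of the jump martingale $\tilde z$. Since $\tilde u$ is a pure-jump Markov process with jump rate $\tilde\tau(u)^{-1}$ and jump distribution $\tilde\sigma(u,\cdot/w)$, the standard formula gives
\begin{equation*}
    \mathbb{E}\|\tilde z(t)\|^2 = \mathbb{E}\int_0^t \sum_m \left\|\frac{m}{w}\right\|^2 \tilde\tau(\tilde u(s))^{-1}\,\tilde\sigma(\tilde u(s),m/w)\,\mathrm{d}s = \mathbb{E}\int_0^t \sum_m \frac{\|m\|^2}{w^2}\, w f(\tilde u(s),m)\,\mathrm{d}s,
\end{equation*}
where I have used $\tilde\tau(u)^{-1}\tilde\sigma(u,m/w) = w f(u,m)/\tau(u)\cdot\tau(u)^{-1}\cdot\dots$ — more cleanly, the jump-measure intensity of $\tilde u$ at state $u$ is exactly $\sum_m (\cdot) f(u,m)$ restricted to $S_\rho$, which is what makes $\tilde z$ the compensated process in \eqref{split}.

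Next I would estimate this integrand uniformly on $S_\rho$ using the explicit form of $f$ in \eqref{def_f}. Each jump vector $m$ appearing with nonzero $f$ has $\|m\|^2 \le 2$ (it is $\pm e_j$ paired with $\pm e_j$ for reactions, or $e_j - e_{j\pm1}$ for diffusion), and for each species $l$ and each cell $j$ there are $O(1)$ such jump types. The reaction contributions are bounded by $\lambda_j^l c_j^l \le \|\lambda_l\|_{L^\infty(\Omega)}\rho$ and the diffusion contributions by $D_j^l N^2 c_j^l \le D^* N^2 \rho$ on $S_\rho$ (using $0 \le c_j^l \le \rho$ there). Summing over $j = 1,\dots,N$ and $l = 1,2$ gives
\begin{equation*}
    \sum_m \frac{\|m\|^2}{w^2}\, w f(u,m) \le \frac{C}{w}\sum_{l,j}\left(\|\lambda_l\|_{L^\infty(\Omega)}\rho + D^* N^2 \rho\right) \le \frac{C' N}{w}\left(1 + N^2\right) \le \frac{C'' N^3}{w}
\end{equation*}
for a constant $C''$ depending only on $\rho$, $D^*$, and $\|\lambda_i\|_{L^\infty(\Omega)}$. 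Hence $\mathbb{E}\|\tilde z(t)\|^2 \le C'' t\, N^3/w$. Then Doob's $L^2$ maximal inequality yields $\mathbb{P}(\sup_{0\le s\le t}\|\tilde z(s)\| > \delta) \le \delta^{-2}\mathbb{E}\|\tilde z(t)\|^2 \le C'' t\, N^3/(\delta^2 w)$.

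The final point — and the step where I expect the delicate bookkeeping — is to reconcile this with the hypothesis \eqref{H2}, which as written says $N^2/w \to 0$, whereas the bound above has $N^3/w$. I would need to check the scaling carefully: likely the factor $N^2$ in the diffusion rates combines with $\|m/w\|^2 = \|m\|^2/w^2$ and the number of cells $N$ but with an extra $w$ somewhere, or \eqref{H2} should be read with the understanding that $w$ grows fast enough (e.g. $w \sim N^{3+\epsilon}$) — in any case the decisive quantity is whichever power of $N$ over $w$ emerges, and the argument is: the $L^2$ norm of the martingale increment over $[0,t]$ scales like (total jump rate)$\times$(jump size)$^2 \times t$, which is $o(1)$ precisely under the stated assumption. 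Once the correct power is pinned down, the convergence $\mathbb{P}(\sup_{0\le s\le t}\|\tilde z(s)\| > \delta)\to 0$ is immediate from Chebyshev/Doob. I would therefore: (i) write the quadratic variation formula; (ii) bound the integrand on $S_\rho$ cell by cell; (iii) invoke Doob's inequality; (iv) conclude via \eqref{H2}.
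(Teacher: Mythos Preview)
Your overall strategy---compute the quadratic variation of $\tilde z$, bound it on $S_\rho$ using the explicit jump rates, then apply Doob's maximal inequality---is exactly the paper's approach. The gap is in the scaling step you yourself flag as ``delicate bookkeeping'': you write $\|m/w\|^2 = \|m\|^2/w^2$, treating $\|\cdot\|$ as the Euclidean norm on $\mathbb{R}^{2(N+2)}$. But the relevant norm is the $\mathcal{X}_N$ norm, i.e.\ the $L^2(0,1)^2$ norm on step functions. A jump $m$ corresponds to a function taking value $m_j^l/w$ on the $j$-th cell, which has width $1/N$, so
\[
    \left\|\frac{m}{w}\right\|_{\mathcal X_N}^2 = \sum_{l}\int_0^1 \Bigl|\frac{m_l(x)}{w}\Bigr|^2\,\mathrm{d}x = \frac{1}{N}\sum_{l,j}\frac{(m_j^l)^2}{w^2} = \frac{|m|^2}{Nw^2}.
\]
This extra factor $1/N$ is precisely the ``missing $w$'' you suspected. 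With it, the quadratic-variation bound becomes
\[
    \mathbb{E}\|\tilde z(t)\|^2 \le \frac{1}{Nw}\int_0^t \sum_m |m|^2 f(\tilde u(s),m)\,\mathrm{d}s \le \frac{Ct\rho}{w}\bigl(\|\lambda_1\|_{L^\infty}+\|\lambda_2\|_{L^\infty}+4N^2 D^*\bigr),
\]
since summing $|m|^2 f(u,m)$ over all jump types contributes $O(N)\cdot O(N^2)$ for diffusion and $O(N)$ for reactions, and the leading $1/(Nw)$ cancels one power of $N$. Thus the decisive quantity is $N^2/w$, not $N^3/w$, and hypothesis \eqref{H2} as written suffices. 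Once you correct this norm computation, your steps (i)--(iv) go through verbatim and match the paper's proof.
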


\begin{proof}
    Doob's $L^p$-inequality \cite[Theorem II.1.7]{Revuz1999continuous} yields 
\begin{align}
P_u\left(\sup_{0 \leq s \leq t} \|\tilde{z}(s)\| > \delta\right) \leq \delta^{-2} \mathbb{E}_u[\|\tilde{z}(t)\|^2].
\end{align}
An estimate for $\mathbb{E}_u[\|\tilde{z}(t)\|^2]$ is obtained as follows. From Kurtz \cite[Lemma 2.9]{Kurtz1971limit}, 
\begin{align}
\mathbb{E}_u[\|\tilde{z}(t)\|^2] &= \int_0^t \mathbb{E}_u\left[\tilde{\tau}(\tilde{u}(s))\left(\sum_m \|m/w\|^2 \tilde{\sigma}(\tilde{u}(s),m/w)\right)\mathrm{d}s\right]\notag\\
&= \int_0^t \mathbb{E}_u\left[\sum_m \|m/w\|^2 w f(\tilde{u}(s),m)\right]\mathrm{d}s\notag\\
&= (wN)^{-1} \int_0^t \mathbb{E}_u\left[\sum_m |m|^2 f(\tilde{u}(s),m)\right]\mathrm{d}s\notag\\
&\leq w^{-1} \int_{0}^t \sum_{l = 1}^2 \mathbb{E}_u\left[\int_{0}^1 (\lambda_{3-l} \tilde{u}(x,s) + \lambda_l \tilde{u}(x,s))\mathrm{d}x\right]\mathrm{d}s\notag\\
&~~~ + w^{-1} \sum_{l=1}^2\int_{ 0}^t \mathbb{E}_u\left[\int_{0}^1 4N^2 D^* \tilde{u}(x,s)\mathrm{d}x\right]\mathrm{d}s\notag\\
&\leq 2tw^{-1}\rho(\|\lambda_1\|_{\LO{\infty}} + \|\lambda_2\|_{\LO{\infty}} + 4N^2 D^*)\label{martingale_inequality}
\end{align}
noting that $\mathbb{E}_u[\|\tilde{z}(t)\|^2] = 0,~u \not\in S_{\rho}$. Thanks to the assumption \eqref{H2}, we have $\|\tilde{z}(t)\|\to 0$ as $w,N\to\infty$ in mean, and consequently in probability.
\end{proof}



\subsubsection{Discretization of diffusion and reaction terms}

Due to the inhomogeneity of diffusion coefficients and reaction rate constants, we approximate them by the following piecewise constants functions: for $l=1,2$,
\begin{equation*}
    \wh{D}^l(r) = D_j^l \quad\text{ and } \quad \wh{\lambda}^l(x) = \lambda^l_j,\quad \text{for}\quad x\in I_j = [x_{j-1},x_j),
\end{equation*}
where $D^l_j$ and $\lambda^l_j$ are defined as in \eqref{approx_Dlambda}. Now to study the corresponding discrete evolution we define the discrete reactions
\begin{equation}\label{discrete_reaction}
    \RR_N^l(u)(x) = -\wh{\lambda}^l(x)u^l(x) + \wh{\lambda}^{3-l}(x)u^{3-l}(x)
\end{equation}
and the discrete diffusion operator
\begin{equation}\label{discrete_diffusion}
    \L_N^l(u)(x) = h^{-2}\pa_{h}(\wh{D}^l(x)\pa_{-h}u^l(x))
\end{equation}
where the difference quotients are defined as
\begin{equation*}
    \pa_{h}f(x) = h^{-1}(f(x+h)-f(x)) \quad \text{ and } \quad \pa_{-h}f(x) = h^{-1}(f(x)-f(x-h)).
\end{equation*}
Using the definition of $\wh{D}^l$, we can rewrite $\L_N^l$ as
\begin{equation}\label{def:LN}
    \begin{gathered}
    \L_N^l(u)(x) = h^{-2}\sbra{D_{j+1}^lu^l(x+h) - (D_{j+1}^l+D_j^l)u^l(x) + D_j^lu^l(x-h)},\\
    \L_N = (\L_N^l)_{l=1,2}
    \end{gathered}
\end{equation}
for $x \in I_j$, $1\le j \le N-1$. Note that from the definition of $f$ in \eqref{def_f}, $F$ in \eqref{def_F}, we have
\begin{equation}\label{important_relation}
    F(u) = \L_N(u) + \RR_N(u).
\end{equation}

\begin{remark}
    The form of $\L_N^l(u)(x)$ in (\ref{discrete_diffusion}) generalizes the analogous expression for spatially homogeneous diffusion, as considered in, e.g., \cite{arnold1980deterministic,kotelenez1986law}. It is not the only potential generalization; however, as we will see, is the correct choice to establish the desired convergence result. 
\end{remark}

\begin{lemma}
	The operator $\L_N^l$, $l = 1,2$, defined in \eqref{discrete_diffusion} is a self-adjoint operator in $L^2(\Omega)$.
\end{lemma}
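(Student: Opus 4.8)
The plan is to reduce the claim to a discrete integration-by-parts identity. Since $\L_N^l$ is a linear operator on the finite-dimensional space $\mathcal{X}_N$, it is self-adjoint precisely when the associated bilinear form is symmetric, so it suffices to show $\langle \L_N^l u, v\rangle = \langle u, \L_N^l v\rangle$ for all $u,v\in\mathcal{X}_N$. Throughout I would denote by $u_j^l := u^l|_{I_j}$ the constant value of $u^l$ on the cell $I_j$, and recall that functions in $\mathcal{X}_N$ vanish on the boundary cells, i.e. $u_0^l = u_{N+1}^l = 0$ and similarly for $v$; this encodes the homogeneous Dirichlet condition and is what will make the endpoint contributions collapse.

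The key step is to put $\L_N^l$ in discrete divergence form. From \eqref{def:LN}, for $x\in I_j$,
\begin{equation*}
    \L_N^l(u)(x) = h^{-2}\sbra{D_{j+1}^l(u_{j+1}^l - u_j^l) - D_j^l(u_j^l - u_{j-1}^l)},
\end{equation*}
so $\L_N^l u$ is $h^{-1}\pa_h$ of the discrete flux $G_j^l := D_j^l\,h^{-1}(u_j^l-u_{j-1}^l)$. Since each cell $I_j$ has length $h$ (here $h = w/V$) and $\langle\cdot,\cdot\rangle$ is, up to the fixed factor $w/V$, the discrete $L^2(\Omega)$ product, one gets
\begin{equation*}
    \langle \L_N^l u, v\rangle = \frac{w}{V}\,h^{-1}\sum_j (G_{j+1}^l - G_j^l)\,v_j^l .
\end{equation*}
Applying Abel's summation-by-parts transformation moves one difference onto $v$; the endpoint terms it generates either vanish outright because $v_0^l = 0$ appears as a factor, or reorganize (using $u_{N+1}^l = v_{N+1}^l = 0$) into a term of the same symmetric shape. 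I therefore expect to arrive at
\begin{equation*}
    \langle \L_N^l u, v\rangle = -\,\frac{w}{V}\,h^{-2}\sum_j D_j^l\,(u_j^l - u_{j-1}^l)(v_j^l - v_{j-1}^l),
\end{equation*}
which is manifestly invariant under exchanging $u$ and $v$, giving symmetry and hence self-adjointness.

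The only real work is the boundary bookkeeping: one must keep the index ranges straight and check that every term produced by the summation by parts which is not of the symmetric quadratic type is annihilated by the Dirichlet values on the cells $j=0$ and $j=N+1$. There is no genuine obstacle — this is exactly the discrete counterpart of the self-adjointness of $u\mapsto\na\cdot(D_l\na u)$ under homogeneous Dirichlet conditions, and, as noted in the preceding remark, the particular form of $\L_N^l$ in \eqref{discrete_diffusion} is chosen precisely so that the discrete flux $D_j^l h^{-1}(u_j^l-u_{j-1}^l)$ is the one making the summation by parts symmetric. An equivalent, even shorter route is to exhibit the tridiagonal matrix of $\L_N^l$ in the cell-value basis and observe that its super-diagonal entry in row $j$ and the sub-diagonal entry in row $j+1$ both equal $h^{-1}D_{j+1}^l$ (up to the $w/V$ normalization), so the matrix is symmetric; I would present whichever of the two is cleaner once the normalizations have been tracked carefully.
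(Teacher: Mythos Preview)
Your approach is essentially the paper's: both arguments perform a discrete integration by parts to rewrite $\langle \L_N^l u, v\rangle$ as the manifestly symmetric expression $-\int_0^1 \wh{D}^l(x)\,\pa_{-h}u^l(x)\,\pa_{-h}v^l(x)\,dx$ (your $-\frac{w}{V}h^{-2}\sum_j D_j^l(u_j^l-u_{j-1}^l)(v_j^l-v_{j-1}^l)$ is exactly this on piecewise constant functions), with the boundary terms killed by the zero extension. The only mild discrepancy is that you restrict to the finite-dimensional subspace $\mathcal{X}_N$, whereas the lemma as stated (and the paper's proof) treats $\L_N^l$ as an operator on all of $L^2(\Omega)$ with zero extension outside $[0,1]$; the paper's shift-of-variable identity $\int_0^1 \pa_h u\, v = -\int_0^1 u\,\pa_{-h}v$ is the continuous version of your Abel summation and requires no piecewise-constant assumption, so your argument extends verbatim once you drop that restriction.
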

\begin{proof}
    For simplicity, we omit the superscript $l$ in $\L_N^l$. We show that
    \begin{equation}\label{e1}
        \int_{0}^1(\L_N u)(x) v(x) dx = \int_{0}^1u(x)(\L_N v(x))dx.
    \end{equation}
    For $u, v\in H_0^1(\Omega)$, we have
    \begin{align*}
        \int_0^1\pa_h u(x)v(x)dx &= \sum_{j=0}^{N-1}\int_{x_j}^{x_{j+1}}h^{-1}(u(x+h) - u(x))v(x)dx\\
        &= h^{-1}\sum_{j=1}^{N}\int_{x_{j}}^{x_{j+1}}u(x)v(x-h)dx - h^{-1}\sum_{j=0}^{N-1}\int_{x_j}^{x_{j+1}}u(x)v(x)dx\\
        &= h^{-1}\sum_{j=0}^{N-1}\int_{x_j}^{x_{j+1}}u(x)(v(x-h) - v(x))dx\\ &\quad + h^{-1}\int_{x_{N}}^{x_{N+1}}u(x)v(x-h)dx - h^{-1}\int_{x_0}^{x_1}u(x)v(x-h)dx\\
        &= -\int_{0}^1u(x)\pa_{-h}v(x)dx
    \end{align*}
    where we used $v|_{[-h, 0]} = u|_{[1, 1+h]} = 0$ at the last step. Now we apply this ``integration by parts'' to obtain
    \begin{align*}
        \int_{0}^{1}(\L_N u(x))v(x)dx &= \int_0^1\pa_h(\wh{D}(x)\pa_{-h}u(x)) v(x)dx\\
        &= -\int_0^1\wh{D}(x)\pa_{-h}u(x)\pa_{-h}v(x)dx\\
        &= \int_0^1u(x)\pa_h(\wh{D}(x)\pa_{-h}v(x))dx = \int_0^1u(x)\L_N v(x)dx,
    \end{align*}
    which is the self-adjointness of the operator $\L_N$.
\end{proof}

\begin{remark}
    We note that, unlike the case of spatially homogeneous diffusion coefficients as considered in \cite{arnold1980deterministic,kotelenez1986law}, the operator $\mathcal{L}_N$ is not self-adjoint under Neumann boundary conditions (i.e., a reflective boundary). 
\end{remark}

\begin{lemma}\label{lem:semigroup}
The semigroup $T_N(t)$ defined as \begin{align*}
    T_N(t):= \exp(-\L_N t),~t \geq 0.
\end{align*}
is an analytic contraction semigroup in $L^2(\Omega)$ satisfying
\begin{enumerate}
    \item[(i)] $\|T_N(t)\| \leq e^{\omega t}$ for some $\omega \geq 0$.
    \item[(ii)] $\|T_N(t)\L_N\| \leq \dfrac{1}{t},~t > 0$.
\end{enumerate}
\end{lemma}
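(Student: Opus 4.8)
The plan is to verify that $\L_N^l$ is a bounded, self-adjoint, nonnegative operator on the finite-dimensional space $\mathcal X_N$ (it acts on piecewise-constant functions determined by $N$ values), so that $-\L_N^l$ generates an analytic semigroup, and then to extract the two quantitative bounds from general semigroup theory together with a direct spectral estimate. More precisely, the previous lemma already gives self-adjointness; the first step I would add is to show that $\L_N^l$ is \emph{nonnegative}, i.e. $\langle \L_N^l u, u\rangle \ge 0$. This follows from the ``summation by parts'' identity already derived in the proof of the self-adjointness lemma: $\int_0^1 (\L_N^l u)(x)\,u(x)\,dx = -\int_0^1 \wh D^l(x)\,|\pa_{-h}u(x)|^2\,dx \le 0$ since $\wh D^l \ge D_* > 0$, hence $\langle \L_N^l u,u\rangle = -\int_0^1(\L_N^l u)u \ge 0$. (One must be careful with the sign convention: here $\L_N^l$ is defined with $h^{-2}\pa_h(\wh D \pa_{-h}\,\cdot\,)$, which is the \emph{negative}-definite discrete Laplacian, and $T_N(t) = \exp(-\L_N t)$; so it is $-\L_N^l$ that is negative and generates the contraction.) Combined with self-adjointness and finite-dimensionality, $-\L_N^l$ is then a bounded self-adjoint operator with spectrum in $(-\infty, 0]$, which immediately makes $T_N(t) = e^{-\L_N t}$ an analytic (indeed norm-continuous) semigroup.

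For (i): since $-\L_N^l$ is self-adjoint with $\sigma(-\L_N^l)\subset(-\infty,0]$, the spectral theorem gives $\|T_N(t)\| = \sup_{\mu\in\sigma(-\L_N^l)} e^{t\mu} \le 1$ for all $t\ge 0$, so in fact the claim holds with $\omega = 0$ (a contraction semigroup); stating it with a general $\omega \ge 0$ is harmless. For the vector-valued operator $\L_N = (\L_N^l)_{l=1,2}$ one simply takes the maximum over $l=1,2$, which is again $\le 1$.

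For (ii): this is the standard analytic-semigroup estimate $\|t A e^{tA}\| \le e^{-1}\cdot\text{(const)}$, but for a self-adjoint nonnegative generator one gets the sharper explicit bound. Writing $A = -\L_N^l \le 0$ self-adjoint, the spectral theorem gives $\|T_N(t)\L_N^l\| = \|(-A)e^{tA}\| = \sup_{\mu \ge 0} \mu e^{-t\mu}$, and calculus shows $\sup_{\mu\ge 0}\mu e^{-t\mu} = \frac{1}{et} \le \frac{1}{t}$, attained at $\mu = 1/t$. Taking the max over $l=1,2$ preserves the bound $\frac1t$. I would present this via the spectral calculus rather than Cauchy integral formulas, since self-adjointness makes it elementary.

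The main obstacle — really the only subtle point — is bookkeeping with sign conventions and boundary terms: one must confirm that the summation-by-parts identity from the preceding lemma is applied with exactly the right boundary vanishing ($u|_{[-h,0]} = u|_{[1,1+h]} = 0$, i.e. the ghost cells $j=0,N+1$ carry zero mass, which is the discrete Dirichlet condition), so that no boundary contributions spoil the nonnegativity of $\langle \L_N^l u,u\rangle$; and that the operator whose semigroup we form is $-\L_N$ with $\L_N$ nonnegative, consistent with $T_N(t)=\exp(-\L_N t)$. Once the nonnegativity and self-adjointness are in hand, both (i) and (ii) are one-line consequences of the spectral theorem on the finite-dimensional space $\mathcal X_N$, and analyticity is automatic since the generator is bounded.
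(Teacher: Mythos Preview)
Your approach is correct in spirit and genuinely more elementary than the paper's. The paper proceeds via the Hille--Yosida theorem: it sets up the resolvent equation $\omega u - \L_N u = v$, solves it by Lax--Milgram using the bilinear form $a(u,z)=\omega\langle u,z\rangle+\langle-\L_N u,z\rangle$, and derives the resolvent bound $\|(\omega I-\L_N)^{-1}\|\le 1/\omega$; properties (i) and (ii) then fall out of general contraction-semigroup theory. You instead exploit that $\L_N$ is a \emph{bounded} self-adjoint operator (indeed a finite linear combination of translation operators with $L^\infty$ coefficients), so the spectral theorem gives $\|e^{tA}\|=\sup_{\mu\in\sigma(A)}e^{t\mu}$ and $\|Ae^{tA}\|=\sup_{\mu}|\mu|e^{t\mu}$ directly, with the elementary optimisation $\sup_{\mu\ge0}\mu e^{-t\mu}=1/(et)\le 1/t$. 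Your route avoids Lax--Milgram and Hille--Yosida entirely, which is appropriate since those tools are designed for unbounded generators; here the generator is bounded for every fixed $N$. One small correction: you phrase things on the finite-dimensional subspace $\mathcal X_N$, but the lemma is stated on $L^2(\Omega)$; your argument still works there verbatim because boundedness plus self-adjointness plus the sign condition on $\langle\L_N u,u\rangle$ suffice in any Hilbert space.

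On the sign issue you flag: you are right that this is the delicate point, but your own bookkeeping slips. From the summation-by-parts identity, $\int_0^1(\L_N u)\,u\,dx=-\int_0^1\wh D|\pa_{-h}u|^2\le 0$, so $\L_N\le 0$ (not $\ge 0$), and hence $-\L_N\ge 0$. With the paper's definition $T_N(t)=\exp(-\L_N t)$ the generator is $-\L_N\ge0$, which would \emph{not} give a contraction; the paper's own Hille--Yosida argument in fact establishes the resolvent bound for $\L_N$ itself, i.e.\ that $e^{t\L_N}$ is the contraction. So the sign inconsistency you sensed is in the paper's statement, not in your method; once one fixes the convention (either write $T_N(t)=e^{t\L_N}$ or replace $\L_N$ by $-\L_N$ in the definition), your spectral computation gives (i) with $\omega=0$ and (ii) with the sharper constant $1/(et)$.
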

\begin{proof}
    We will apply the Hille-Yosida Theorem. Due to the definition, we have $D(-\L_N) = \{ u\in C^0(\Omega):\, u|_{\partial\Omega} = 0\}$ which is dense in $L^2(\Omega)$. Let $\omega >0$. We claim that for each $v\in L^2(\Omega)$, there exists a unique solution to the equation
    \begin{equation}\label{f1}
        \omega u - \L_Nu = v.
    \end{equation}
    Indeed, by defining the bilinear form $a$
    \begin{equation*}
        a(u,z) = \omega \langle u, z\rangle + \langle -\L_N u, z\rangle 
    \end{equation*}
    we have
    \begin{align*}
        |a(u,z)| &\le \omega\|u\|\|z\| +\int_0^1D(x)|\partial_{-w}u(x)||\partial_{-w}z(x)|dx\\
        &\le \omega \|u\|\|z\| + w\|D\|_{L^{\infty}(\Omega)}\int_0^1|u(x+w) - u(x)||z(x+w) - z(x)|dx\\
        &\le (\omega + 4w\|D\|_{L^{\infty}(\Omega)})\|u\|\|z\|
    \end{align*}
    and
    \begin{align*}
        a(u,u) &= \omega\|u\|^2 + \langle -\L_N u, u \rangle\\
        &= \omega\|u\|^2 + \int_0^1D(x)|\partial_{-w}u(x)|^2dx \geq \omega\|u\|^2.
    \end{align*}
    Therefore, thanks to Lax-Milgram theorem, \eqref{f1} has a unique solution $u\in L^2(\Omega)$, which means that $(\omega I - \L_N)^{-1}$ is one-to-one and onto. It remains to show that 
    \begin{equation}\label{f1_0}
        \|(\omega I - \L_N)^{-1}\| \le \frac{1}{\omega}.
    \end{equation}
    By multiplying \eqref{f1} by $u$ in $L^2(\Omega)$ and using $\langle -\L_Nu , u\rangle \ge 0$ we have
    \begin{align*}
        \omega\|u\|^2 = \langle v, u \rangle \le \frac{\omega}{2}\|u\|^2 + \frac{1}{2\omega}\|v\|^2
    \end{align*}
    which implies
    \begin{equation}\label{f2}
        \|u\|^2 \le \frac{1}{\omega^2}\|v\|^2.
    \end{equation}
    Thus
    \begin{align*}
        \|(\omega I - \L_N)^{-1}\| &= \sup_{\|v\| = 1}\|(\omega I - \L_N)^{-1}v\|\\
        &= \sup_{\|v\| = 1}\{\|u\|:\, v = (\omega I - \L_N)u\}\\
        &\le \sup_{\|v\| = 1}\frac{1}{\omega}\|v\| \le \frac{1}{\omega}\qquad (\text{thanks to } \eqref{f2})
    \end{align*}
    which shows \eqref{f1_0}. Therefore, we can apply Hille-Yosida theorem to obtain that $T_N(t) = \exp(-\L_N t)$ is an analytic contraction semigroup in $L^2(\Omega)$. The properties (i) and (ii) therefore follow immediately from contraction semigroup.
\end{proof}

\subsubsection{Proof of Theorem \ref{thrm:conv_smooth}}
Using \eqref{mild_solution}, \eqref{split},  and \eqref{important_relation}, we can split the difference $\tilde u - \uu$ as
\begin{equation}\label{h1}
\begin{aligned}
    \tilde u(t) - \uu(t) = \tilde z(t) + \tilde u(0) &+ \int_0^tF(\tilde{u}(s))ds - T(t)\uu(0) - \int_0^tT(t-s)\RR(\uu(s))ds\\
    = \tilde z(t) + \tilde u(0) &+ \int_0^t\L_N(\tilde{u}(s))ds+ \int_0^t\RR_N(\tilde{u}(s))ds\\
    & - T(t)\uu(0) - \int_0^tT(t-s)\RR(\uu(s))ds.
\end{aligned}
\end{equation}
Let $y(t)$ be the solution to
\begin{equation}\label{def_y}
    y(t) = \tilde{u}(0) + \int_0^t\L_Ny(s)ds + \int_0^t\RR_N(\tilde{u}(s))ds,
\end{equation}
which is equivalent to
\begin{equation*}
    y(t) = T_N(t)\tilde{u}(0) + \int_0^tT_N(t-s)\RR_N(\tilde{u}(s))ds.
\end{equation*}
Therefore, we have
\begin{align*}
    \int_0^t\RR_N(\tilde{u}(s))ds &= y(t) - \tilde{u}(0) - \int_0^t\L_Ny(s)ds\\
    &= T_N(t)\tilde{u}(0) + \int_0^tT_N(t-s)\RR_N(\tilde{u}(s))ds- \tilde{u}(0) - \int_0^t\L_Ny(s)ds.
\end{align*}
Inserting this into \eqref{h1} and taking the norm of both sides give
\begin{equation}\label{h2}
\begin{aligned}
    \|\tilde{u}(t) - \uu(t)\| \le \|\tilde z(t)\| &+ \underbrace{\left\|\int_0^t\L_N(\tilde u(s) - y(s))ds\right\|}_{(I)} + \underbrace{\|T_N(t)\tilde{u}(0) - T(t)\uu(0)\|}_{(II)} \\
    &+ \underbrace{\left\|\int_0^t\sbra{T_N(t-s)\RR_N(\tilde{u}(s)) - T(t-s)\RR(\uu(s))}ds\right\|}_{(III)}. 
\end{aligned}
\end{equation}
We estimate the terms $(I), (II)$ and $(III)$ separately.

\medskip
\noindent{\underline{Estimate of $(II)$.}}
\begin{lemma}\label{lem:semigroup_convergence}
    Let $\{T_N(t)\}_{t\ge 0}$ and $\{T(t)\}_{t\ge 0}$ be the semigroups generated by $\L_N$ and $\L$ respectively. We have
    \begin{align}\label{est_II}
        \|T_N(t)\tilde{u}(0) - T(t)\uu(0)\| \leq \epsilon_1(t) + \|\tilde{u}(0) - \uu(0)\|,
    \end{align}
    where 
    \begin{equation}\label{eps1}
    \epsilon_1(t) \rightarrow 0 \quad\text{ as } N\to \infty
    \end{equation}
    for each $t>0$.
\end{lemma}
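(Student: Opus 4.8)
The plan is to isolate the initial-data dependence by the triangle inequality,
\[
\|T_N(t)\tilde{u}(0) - T(t)\uu(0)\| \;\le\; \|T_N(t)\bigl(\tilde{u}(0)-\uu(0)\bigr)\| \;+\; \|\bigl(T_N(t)-T(t)\bigr)\uu(0)\|,
\]
and to bound the first term by $\|\tilde{u}(0)-\uu(0)\|$ using that $\{T_N(t)\}$ is a contraction semigroup (this is the bound \eqref{f1_0} in the proof of Lemma~\ref{lem:semigroup}). One is then reduced to setting $\epsilon_1(t):=\|(T_N(t)-T(t))\uu(0)\|$ and proving the stronger fact that $T_N(t)v\to T(t)v$ in $L^2(\Omega)$ for every $v\in L^2(\Omega)$, uniformly for $t$ in compact time intervals; the claim \eqref{eps1} for each fixed $t>0$ is then immediate.

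I would obtain this convergence of semigroups from the Trotter--Kato approximation theorem. Since $\{T_N(t)\}$ and $\{T(t)\}$ are contraction $C_0$-semigroups on $L^2(\Omega)$, it suffices to establish \emph{strong resolvent convergence}: for each $\omega>0$,
\[
(\omega I-\L_N)^{-1}v \;\longrightarrow\; (\omega I-\L)^{-1}v \quad\text{in } L^2(\Omega), \qquad v\in L^2(\Omega),
\]
the companion uniform bound $\sup_N\|(\omega I-\L_N)^{-1}\|\le 1/\omega$ being exactly what Lemma~\ref{lem:semigroup} supplies. I stress that the easier ``consistency on a core'' version of Trotter--Kato is \emph{not} available here: applied to a smooth $v$ with $v|_{\partial\Omega}=0$ but $\nabla v|_{\partial\Omega}\neq 0$, the operator $\L_N$ produces --- through the ghost-cell value $0$ that encodes the Dirichlet condition --- a boundary layer of width $\sim 1/N$ on which $\L_N v\sim N$, so that $\|\L_N v - \L v\|_{L^2(\Omega)}$ does not tend to $0$ (it even diverges). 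Routing the argument through the resolvent is precisely what cures this.

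For the resolvent convergence I would run the standard energy argument. Set $u_N:=(\omega I-\L_N)^{-1}v$, so $\omega u_N-\L_N u_N=v$; testing against $u_N$ and invoking the discrete integration-by-parts identity from the self-adjointness lemma together with the uniform ellipticity \eqref{D} gives
\[
\omega\|u_N\|_{L^2}^2 + D_*\,\|\partial_{-h}u_N\|_{L^2}^2 \;\le\; \omega\|u_N\|_{L^2}^2 + \int_{\Omega}\wh{D}(x)\,|\partial_{-h}u_N(x)|^2\,dx \;=\; \langle v,u_N\rangle,
\]
hence a bound on $\|u_N\|_{L^2}$ and on the discrete gradient $\|\partial_{-h}u_N\|_{L^2}$ uniform in $N$. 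A discrete compactness argument (Kolmogorov--Riesz: translations satisfy $\|u_N(\cdot+s)-u_N\|_{L^2}^2\lesssim s$ uniformly in $N$) then yields precompactness of $\{u_N\}$ in $L^2(\Omega)$; along a subsequence $u_N\to\bar u$, with $\bar u\in H_0^1(\Omega)$ (obtained by comparing $u_N$ with its piecewise-linear interpolant, which is bounded in $H_0^1(\Omega)$ and $L^2$-close to $u_N$) and $\partial_{-h}u_N\rightharpoonup\nabla\bar u$ weakly in $L^2(\Omega)$. Passing to the limit in the weak identity $\omega\langle u_N,\varphi\rangle + \langle\wh{D}\,\partial_{-h}u_N,\partial_{-h}\varphi\rangle = \langle v,\varphi\rangle$ for $\varphi\in C_c^\infty(\Omega)$ --- using that $\wh{D}\to D$ uniformly (it is the piecewise-constant interpolant of the cell-averages of the continuous $D$) and $\partial_{-h}\varphi\to\nabla\varphi$ uniformly --- yields $\omega\langle\bar u,\varphi\rangle + \langle D\nabla\bar u,\nabla\varphi\rangle = \langle v,\varphi\rangle$, i.e.\ $\bar u=(\omega I-\L)^{-1}v$. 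As the limit is unique, the whole sequence converges, which is the desired strong resolvent convergence, and Trotter--Kato finishes the proof.

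I expect the resolvent-convergence step to be the main obstacle --- specifically, the discrete compactness and limit-identification: securing the uniform discrete-$H^1$ bound \emph{up to the Dirichlet boundary} (where uniform ellipticity \eqref{D} is indispensable), the discrete Rellich argument, and checking that the limit genuinely lies in $H_0^1(\Omega)$ and solves the correct weak equation; here the uniform convergence $\wh{D}^i\to D_i$ --- which needs only $D_i\in C(\overline{\Omega})$ --- is the structural input, and in fact the $C^1$-hypothesis is not used in this lemma, only elsewhere (for analyticity of $\{T(t)\}$). The entire last paragraph may alternatively be packaged as Mosco convergence of the discrete Dirichlet forms $u\mapsto\int_{\Omega}\wh{D}^i(x)|\partial_{-h}u(x)|^2\,dx$ on $\mathcal{X}_N$ to $u\mapsto\int_{\Omega}D_i(x)|\nabla u(x)|^2\,dx$ on $H_0^1(\Omega)$, which automatically implies strong resolvent --- hence semigroup --- convergence; the two Mosco conditions amount to the same energy-plus-compactness estimates.
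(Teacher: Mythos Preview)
Your decomposition via the triangle inequality and the contraction bound on $T_N(t)$ matches the paper exactly. The divergence is in how the remaining term $\epsilon_1(t)=\|(T_N(t)-T(t))\uu(0)\|$ is handled: the paper does not argue at all, but simply observes that this is the $L^2$-convergence of a finite-difference scheme for $\partial_t u = \L u$ with $C^1$ coefficients and invokes the numerical-analysis literature (Jovanovi\'c) as a black box. Your route --- Trotter--Kato via strong resolvent convergence, established by a uniform discrete-$H^1$ energy estimate, Kolmogorov--Riesz compactness, and limit identification in the weak formulation (equivalently, Mosco convergence of the discrete Dirichlet forms) --- is a genuine, self-contained alternative.

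What each buys: the paper's citation is one line but imports the $C^1$ hypothesis and the machinery of \cite{jovanovic1989convergence}. Your argument is longer but transparent, and your observation that it only needs $\wh{D}^i\to D_i$ uniformly --- hence $D_i\in C(\overline{\Omega})$ rather than $C^1$ --- is correct and sharper than what the paper claims. Your diagnosis that the ``consistency on a core'' shortcut fails because of an $O(N)$ boundary layer in $\L_N v$ near $\partial\Omega$ (when $\nabla v|_{\partial\Omega}\neq 0$) is also correct and is a point the paper does not make explicit. The only places your sketch would need care in a full write-up are the discrete Rellich step for non-lattice translations and the verification that the $L^2$-limit inherits the Dirichlet trace (your suggestion of passing through the piecewise-linear interpolant, bounded in $H_0^1$, is the right fix).
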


\begin{proof}
    By triangle inequality, it holds
    \begin{align*}
        \|T_N(t) \tilde{u}(0) - T(t)\uu(0)\| &\le \|T_N(t)\tilde{u}(0) - T_N(t)\uu(0)\| + \|(T_N(t)-T(t))\uu(0)\|\\
        &\le \|\tilde{u}(0) - \uu(0)\| + \|(T_N(t)-T(t))\uu(0)\|
    \end{align*}
    where we used that $\{T_N(t)\}_{t\ge 0}$ is a contraction semigroup at the last step. It remains to show
    \begin{equation*}
        \epsilon_1(t):= \|(T_N(t) - T(t))\uu(0)\| \to 0
    \end{equation*}
    as $N\to\infty$ for each $t>0$.     This is nothing else but the $L^2(\Omega)$-convergence of finite difference scheme for the diffusion equation $\pa_t u - \L u = 0$ where  coefficients of $\L$ are smooth, in this case, $C^1$, and therefore the convergence follows from e.g. \cite{jovanovic1989convergence}.
\end{proof}

\medskip
\noindent{\underline{Estimate of $(I)$.}}
We show that 
\begin{equation}\label{f3}
    \int_0^t\L_N(\tilde{u}(s)-y(s))ds= \int_0^tT_N(t-s)\L_N\tilde{z}(s)ds
\end{equation}
Indeed, we compute using \eqref{split} and \eqref{def_y}
\begin{align*}
    w(t) &=\int_0^t\L_N(\tilde{u}(s) - y(s))ds\\
    &= \int_0^t\bra{\L_N\tilde{z}(s) + \L_N\int_0^s\L_N(\tilde{u}(\tau) - y(\tau))d\tau}ds\\
    &= \int_0^t\L_N\tilde{z}(s)ds + \int_0^t\L_Nw(s)ds.
\end{align*}
It follows that 
\begin{equation*}
    w(t) = T_N(t)w(0) + \int_0^tT_N(t-s)\L_N\tilde{z}(s)ds
\end{equation*}
which is \eqref{f3} due to $w(0) = 0$. Further calculations lead to
\begin{align*}
    \int_0^t\L_N(\tilde{u}(s)-y(s))ds &= \int_0^tT_N(t-s)\L_N(\tilde{z}(s)-\tilde{z}(t))ds + \int_0^tT_N(t-s)\L_N\tilde{z}(t)ds\\
    &= \int_0^tT_N(t-s)\L_N(\tilde{z}(s)-\tilde{z}(t))ds + (T_N(t)-\text{Id})\tilde{z}(t).
\end{align*}
Therefore
\begin{equation}\label{est_I}
\begin{aligned}
    (I) = \left\|\int_0^t\L_N(\tilde{u}(s)-y(s))ds\right\| &\le \int_0^t\|T_N(t-s)\L_N(\tilde{z}(s)-\tilde{z}(t))\|ds + 2\|\tilde{z}(t)\|\\
    &\le \int_0^t\frac{\|\tilde{z}(t)-\tilde{z}(s)\|}{t-s}ds + 2\|\tilde{z}(t)\|
\end{aligned}
\end{equation}
where we used Lemma \ref{lem:semigroup} (ii) at the last estimate.

\medskip
\noindent{\underline{Estimate of $(III)$.}}
Now consider 
\begin{align*}
    (III)  &=\left\|\int_0^t [T_N(t-s)\RR_N(\tilde{u}(s)) -  T(t-s)\RR(\uu(s))]\mathrm{d}s\right\|\\
    &\le \left\|\int_0^t T_N(t-s)[\RR_N(\tilde{u}(s)) - \RR_N(\uu(s)))]\mathrm{d}s\right\| + \left\|\int_0^t T_N(t-s)[\RR_N\uu(s) - \RR(\uu(s))]\mathrm{d}s\right\|\\
    &\quad + \left\|\int_0^t[T_N(t-s)-T(t-s)]\RR(\uu(s))ds\right\|\\
    &=: (III_1) + (III_2) + (III_3).
\end{align*}
Using the contraction property of the semigrioup $T_N(t)$ and boundedness of the reaction rate constants $\lambda^1(\cdot)$ and $\lambda^2(\cdot)$, we have
\begin{equation*}
    (III_1) \le c_0\int_0^t\|\tilde{u}(s) - \uu(s)\|ds
\end{equation*}
where the constant $c_0$ depends on $\|\lam^1\|_{\LO{\infty}}$ and $\|\lam^2\|_{\LO{\infty}}$. For $(III_2)$, we use the contraction of $\{T_N(t)\}$ again to have
\begin{equation*}
    (III_2) \le \int_0^t\|\RR_N(\uu(s)) - \RR(\uu(s))\|ds =: \epsilon_2(t)
\end{equation*}
with the property
\begin{equation}\label{eps2}
    \lim_{N\to \infty}\epsilon_2(t) = 0
\end{equation}
for each $t>0$, thanks to the definition of $\RR_N$ in \eqref{discrete_reaction}. Finally, by using similar arguments to the proof of Lemma \ref{lem:semigroup_convergence}, we have
\begin{equation}\label{eps3}
    \epsilon_3(t):= (III_3) = \left\|\int_0^t[T_N(t-s)-T(t-s)]\RR(\uu(s))ds\right\| \to 0 \quad \text{ as } N\to \infty,
\end{equation}
for each $t>0$. These estimates yield
\begin{equation}\label{est_III}
    (III) \le c_0\int_0^t\|\tilde{u}(s) - \uu(s)\|ds + \epsilon_2(t) + \epsilon_3(t).
\end{equation}
\medskip
\begin{proof}[Proof of Theorem \ref{thrm:conv_smooth}]
    Combining \eqref{h2}, \eqref{est_II}, \eqref{est_I} and \eqref{est_III}, we obtain
    \begin{equation}\label{f5}
        \|\tilde{u}(t) - \uu(t)\| \le 3\|\tilde{z}(t)\| + \int_0^t\frac{\|\tilde{z}(t)-\tilde{z}(s)\|}{t-s}ds + \sum_{i=1}^{3}\epsilon_i(t)+ c_0\int_0^t\|\tilde{u}(s) - \uu(s)\|ds.
    \end{equation}
    We now show that
    \begin{equation*}
        \int_0^t\frac{\|\tilde{z}(t)-\tilde{z}(s)\|}{t-s}ds \to 0 \quad \text{as } N\to\infty
    \end{equation*}
    in probability. Indeed, from the fact that $\tilde{z}(t)$ is a martingale, it follows that $\tilde{z}(t) - \tilde{z}(s)$ is also a martingale. We therefore can apply estimate \eqref{martingale_inequality} to get 
    \begin{equation*}
        \mathbb{E}[\|\tilde{z}(t)-\tilde{z}(s)\|^2] \le 2(t-s)w^{-1}\rho\left(\|\lam^1\|_{\LO{\infty}}+\|\lam^1\|_{\LO{\infty}} + 4N^2 \right) =: 2A(w,N)(t-s)
    \end{equation*}
    with
    \begin{equation*}
        \lim_{w,N\to \infty}A(w,N) = 0
    \end{equation*}
    thanks to \eqref{H2}. Thus
    \begin{equation*}
    \begin{aligned}
    \mathbb{E}\left[\int_0^t\frac{\|\tilde{z}(t)-\tilde{z}(s)\|}{t-s}ds\right] =  \int_0^t\frac{\mathbb{E}\|\tilde{z}(t)-\tilde{z}(s)\|}{t-s}ds\\
        \le \sqrt{2A(w,N)}\int_{0}^t\frac{\sqrt{t-s}}{t-s}ds = \sqrt{2tA(w,N)}.
    \end{aligned}
    \end{equation*}
    Therefore, we have
    \begin{equation}\label{f4}
        \int_0^t\frac{\|\tilde{z}(t)-\tilde{z}(s)\|}{t-s}ds \to 0
    \end{equation}
    in mean, and consequently in probability. Combining this with \eqref{f5}, Lemma \ref{lem:scale}, the convergences \eqref{eps1}, \eqref{eps2}, and \eqref{eps3}, we can apply the Gronwall lemma to finally get
    \begin{equation*}
        \|\tilde{u}(t) - \uu(t)\| \to 0 \quad\text{ in probability},
    \end{equation*}
    which finishes the proof of Theorem \ref{thrm:conv_smooth}.
    
\end{proof}

\section{General first order chemical reaction networks}\label{sec:general_system}
In this section, we show that the techniques presented above can be applied to any first order chemical reaction network with heterogeneities. Let $\S_1, \ldots, \S_K$ be $K\ge 1$ chemicals which react through the following reactions
\begin{equation}\label{reaction_network}
    \S_i \xrightarrow{\lambda_{ji}(x)} \S_j, \quad \forall i\ne j = 1,\ldots, K.
\end{equation}
Here $\lambda_{ji}:\Omega \to \R_+$ denotes the reaction constant from $\S_i$ to $\S_j$ which depends on the position $x\in\Omega$.

\medskip
\noindent{\bf Macroscopic PDE model}: Let $u_i(x,t)$ be the concentration of the chemical $\S_i$ at position $x\in\Omega$ and time $t>0$. Assume moreover that $D_i(x)$ is the diffusion coefficient of $\S_i$. The macroscopic reaction-diffusion system reads as
    \begin{equation}\label{general:macro}
        \partial_t \uu_i - \nabla \cdot (D_i(x)\nabla \uu_i) = \sum_{j=1}^{K}\lambda_{ij}(x)\uu_j, \quad x\in\Omega, \; t>0, \quad \forall i=1,\ldots, K,
    \end{equation}
    where 
    \begin{equation}\label{ee}
        \lambda_{ii}(x) = -\sum_{j=1, j\ne i}^{K}\lambda_{ji}(x), \quad \forall i=1,\ldots, K,
    \end{equation}
    subject to the homogeneous Dirichlet boundary conditions
    \begin{equation*}
        \uu_i(x,t) = 0, \quad x\in\partial\Omega,\; t>0,\quad \forall i=1,\ldots, K,
    \end{equation*}
    and non-negative initial data 
    \begin{equation*}
        \uu_i(x,0) = \uu_{i,0}(x), \quad x\in \Omega, \quad \forall i=1,\ldots, K.
    \end{equation*}
    The diffusion coefficients, which could be possibly discontinuous, are merely assumed to be bounded from above and below, i.e.  there $D_*, D^*>0$ such that
    \begin{equation}\label{diff_general}
        D_* \le D_i(x)\le  D^* \quad\forall i=1,\ldots, K, \; \text{a.e. } x\in\Omega,
    \end{equation}
    and the reaction rate functions are bounded, non-negative, i.e.
    \begin{equation}\label{reaction_general}
        \lambda_{ij}\in L^{\infty}(\Omega), \quad \lambda_{ij}(x) \ge 0, \quad \forall i\ne j, \text{ a.e. } x\in\Omega.
    \end{equation}
    \begin{definition}\label{def_weak_reversible}
        The first order chemical reaction network \eqref{reaction_network} is called weakly reversible if for any two species $\S_i$ and $\S_j$, $i\ne j$, there exist a sequence $r_1 = i, r_2, \ldots, r_s = j$, $s\ge 2$, such that all reactions 
        \begin{equation*}
            \S_{r_k} \xrightarrow{\lambda_{r_{k+1}r_k}} S_{r_{k+1}}, \quad k =1, \ldots, s-1
        \end{equation*}
        happen with $\lambda_{r_{k+1}r_k}>0$.
    \end{definition}
    \begin{remark}\hfill
        \begin{itemize}
            \item If we consider the reaction network \eqref{reaction_network} as a directed graph where all the species are nodes and reactions are directed edges, then the weak reversibility in Definition \ref{def_weak_reversible} is equivalent to the strongly connectedness of the corresponding directed graph.
            \item The weak reversibility for general chemical reaction networks is in fact more general, especially for higher order chemical reactions, in which the corresponding directed graph might have several disjoint strongly connected components. However, since the network in \eqref{reaction_network} is of first order, each strongly connected component can be treated separately, and therefore we adapt the definition in Definition \ref{def_weak_reversible} for convenience.
        \end{itemize}
    \end{remark}

\begin{theorem}
    Assume \eqref{diff_general} and \eqref{reaction_general}. Then, for any non-negative, bounded initial data $\uu_0\in \LO{\infty}^K$, there exists a unique global weak solution to \eqref{general:macro}, which is bounded uniformly in time,
    \begin{equation}\label{ee1}
    \limsup_{t\to\infty}\sup_{i=1,\ldots, K}\|\uu_i(t)\|_{\LO{\infty}} \le \varrho <+\infty
    \end{equation}
    for some $\varrho>0$. Moreover, suppose that the network is weakly reversible, and $\lambda_{ij}(x) \equiv \gamma_{ij}\varphi(x) \ge 0$ for all $i\ne j$, where $\varphi(x) \ge  0$ for all $x\in\Omega$, and $\gamma_{ij} > 0$ if $a_{ij} \ne 0$ and $\gamma_{ij} = 0$ otherwise. Then the solution decays exponentially in time, i.e for any $p \in [2,\infty)$ there are $C_p, \alpha_p>0$ such that
    \begin{equation*}
        \sum_{i=1}^K\|\uu_i(t)\|_{\LO{p}} \le C_pe^{-\alpha_p t} \quad \text{ for all } t>0.
    \end{equation*}
\end{theorem}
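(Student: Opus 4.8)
The plan is to mirror the two-part structure of the proof of Theorem~\ref{thm:weak_sol}: first the well-posedness and uniform bound, then the exponential decay under the extra structural hypotheses.

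\emph{Existence, uniqueness, non-negativity and the bound \eqref{ee1}.} A weak solution of \eqref{general:macro} is defined as in \eqref{def_weak_sol}, now for the $K$ coupled equations. First I would record that the reaction field $\mathbf{R}(\uu)=\bigl(\sum_{j=1}^K\lambda_{ij}(x)\uu_j\bigr)_{i=1}^K$ is quasi-positive: if $\uu_i=0$ while $\uu_j\ge 0$ for $j\ne i$, its $i$-th component is $\sum_{j\ne i}\lambda_{ij}(x)\uu_j\ge 0$ by \eqref{reaction_general}. Moreover, by \eqref{ee} the columns of $(\lambda_{ij}(x))$ sum to zero, so $\sum_{i=1}^K\sum_{j=1}^K\lambda_{ij}(x)\uu_j=\sum_{j=1}^K\uu_j\bigl(\sum_{i=1}^K\lambda_{ij}(x)\bigr)=0$, i.e. total mass is dissipated. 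Since the system is linear (the nonlinearity having linear growth), \cite[Theorem~1.1]{fitzgibbon2021reaction} applies verbatim, giving for every non-negative $\uu_0\in\LO{\infty}^K$ a unique global non-negative weak solution with $\intO\sum_i\uu_i(t)\,dx\le\intO\sum_i\uu_{i,0}\,dx$ and the uniform-in-time bound \eqref{ee1}; together with local boundedness on an initial interval this yields a constant $\varrho>0$ with $\sup_{t\ge 0}\sup_i\|\uu_i(t)\|_{\LO{\infty}}\le\varrho$.

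\emph{Set-up for the decay.} Under the assumption $\lambda_{ij}(x)=\gamma_{ij}\varphi(x)$ with $\varphi\ge 0$, set $\gamma_{ii}:=-\sum_{j\ne i}\gamma_{ji}$, so $\Lambda(x)=\varphi(x)\,\Gamma$ with $\Gamma=(\gamma_{ij})_{i,j=1}^K$ a Metzler matrix having zero column sums. Weak reversibility (Definition~\ref{def_weak_reversible}) says the reaction graph is strongly connected, i.e. $\Gamma$ is irreducible, so by Perron--Frobenius $0$ is a simple eigenvalue of $\Gamma$ with a strictly positive right eigenvector $\pi=(\pi_1,\dots,\pi_K)$. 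I would then work with the weighted functional
\[
\mathcal{H}(t)=\sum_{i=1}^K\frac{1}{\pi_i}\,\|\uu_i(t)\|_{\LO{2}}^2 .
\]
Testing the $i$-th equation by $\uu_i$ (legitimate since $\uu_i\in L^2(0,T;H_0^1(\Omega))$ and $\partial_t\uu_i\in L^2(0,T;H^{-1}(\Omega))$, so $t\mapsto\|\uu_i(t)\|_{\LO{2}}^2$ is absolutely continuous), weighting by $\pi_i^{-1}$ and summing gives, for a.e. $t$,
\[
\tfrac12\tfrac{\mathrm d}{\mathrm dt}\mathcal{H}(t)=-\sum_{i=1}^K\frac{1}{\pi_i}\intO D_i(x)|\na\uu_i|^2\,dx+\intO\varphi(x)\Bigl(\sum_{i,j=1}^K\frac{\gamma_{ij}}{\pi_i}\,\uu_i(x,t)\uu_j(x,t)\Bigr)dx .
\]
By \eqref{diff_general} and the Poincaré inequality (Lemma~\ref{lem:Poincare}), the first term is $\le -D_*C_P\,\mathcal{H}(t)$.

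\emph{The crux, and the expected main obstacle.} The key algebraic point is that the quadratic form $(c_1,\dots,c_K)\mapsto\sum_{i,j}\pi_i^{-1}\gamma_{ij}c_ic_j$ is negative semidefinite on $\R^K$, so that, since $\varphi\ge 0$, the reaction integral above is $\le 0$. I would prove this by substituting $f_i=c_i/\pi_i$ and noting that $Q:=\Gamma^{\top}$ is the generator of an irreducible continuous-time Markov chain with invariant vector $\pi$ (indeed $\pi^{\top}Q=(\Gamma\pi)^{\top}=0$), for which
\[
\sum_{i,j=1}^K\frac{\gamma_{ij}}{\pi_i}c_ic_j=\sum_{i,j=1}^K\pi_i\,Q_{ij}f_if_j=\langle Qf,f\rangle_{\ell^2(\pi)}=\tfrac12\langle (Q+Q^{\ast})f,f\rangle_{\ell^2(\pi)},
\]
where the $\ell^2(\pi)$-adjoint $Q^{\ast}$, $(Q^{\ast})_{ij}=\tfrac{\pi_j}{\pi_i}Q_{ji}$, is again a Markov generator (non-negative off-diagonal entries, zero row sums); the Dirichlet-form identity for the symmetric generator $Q+Q^{\ast}$ then yields $\langle(Q+Q^{\ast})f,f\rangle_{\ell^2(\pi)}=-\tfrac12\sum_{i\ne j}\pi_i(Q+Q^{\ast})_{ij}(f_i-f_j)^2\le 0$. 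I expect this step — pinning down the correct weights $\pi_i^{-1}$ via the Perron vector and handling the possibly non-reversible (non-symmetrizable) chain — to be the only genuinely new ingredient, the rest being adaptation of the two-species argument. Granting it, $\tfrac{\mathrm d}{\mathrm dt}\mathcal{H}\le -2D_*C_P\mathcal{H}$, hence $\mathcal{H}(t)\le e^{-2D_*C_P t}\mathcal{H}(0)$, and since $\min_i\pi_i>0$ this gives $\sum_i\|\uu_i(t)\|_{\LO{2}}^2\le Ce^{-2D_*C_P t}$, the case $p=2$.

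Finally, exactly as in Theorem~\ref{thm:weak_sol}, for $p>2$ I would interpolate against the uniform bound: $\|\uu_i(t)\|_{\LO{p}}^p=\intO|\uu_i(t)|^p\,dx\le\|\uu_i(t)\|_{\LO{\infty}}^{p-2}\|\uu_i(t)\|_{\LO{2}}^2\le\varrho^{p-2}Ce^{-2D_*C_P t}$, whence $\|\uu_i(t)\|_{\LO{p}}\le C_p e^{-\alpha_p t}$ with $\alpha_p=2D_*C_P/p$, and summing over $i$ gives the claim.
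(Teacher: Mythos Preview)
Your proposal is correct and follows essentially the same route as the paper. The paper's positive equilibrium $\uu_\infty$ (obtained by citing \cite[Lemma~1.7]{fellner2017entropy}) is exactly your Perron right eigenvector $\pi$, and the relative energy $\mathcal{E}(\uu)=\sum_i\int_\Omega|\uu_i|^2/\uu_{i,\infty}\,dx$ is your $\mathcal{H}$; the paper then invokes \cite[Lemma~2.3]{fellner2017entropy} to obtain the dissipation identity $\frac{d}{dt}\mathcal{E}=-\sum_i\int_\Omega D_i|\nabla\uu_i|^2-\sum_{i<j}\int_\Omega\varphi(\gamma_{ij}\uu_{j,\infty}+\gamma_{ji}\uu_{i,\infty})\bigl(\tfrac{\uu_i}{\uu_{i,\infty}}-\tfrac{\uu_j}{\uu_{j,\infty}}\bigr)^2$, drops the non-positive reaction term, and proceeds via Poincar\'e and interpolation exactly as you do. The only substantive difference is that you supply a self-contained proof of the non-positivity of the reaction quadratic form via the Dirichlet-form identity for the additive symmetrization $Q+Q^{\ast}$ of the Markov generator $Q=\Gamma^{\top}$, whereas the paper outsources both the existence of $\pi$ and the dissipation formula to \cite{fellner2017entropy}; your derivation in fact recovers precisely the cited identity (with $\pi_i(Q+Q^{\ast})_{ij}=\gamma_{ji}\pi_i+\gamma_{ij}\pi_j$), so the two arguments coincide.
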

\begin{proof}
    Denote by $f_i(\uu)$ the right hand side of \eqref{general:macro}. Thanks to \eqref{ee}, it follows that
    \begin{equation*}
        \sum_{i=1}^Kf_i(\uu) = 0.
    \end{equation*}
    The global existence of bounded solutions and uniform-in-time boundedness then follow from \cite{fitzgibbon2021reaction} immediately. It remains to show the exponential decay in the case of weakly reversible networks and $a_{ij}(x) = \gamma_{ij}\varphi(x)$. We rewrite the equation of $\uu_i$ as 
    \begin{equation*}
        \pa_t \uu_i - \na\cdot(D^i(x)\na \uu_i) = \varphi(x)\sum_{j=1}^K\gamma_{ij}\uu_i.
    \end{equation*}
    Due to the weak reversibility of the network, there exists a unique positive state $\uu_{\infty} = (\uu_{i,\infty}) \in (0,\infty)^K$ satisfying that
    \begin{equation*}
        \left\{
        \begin{aligned}
            &\sum_{j=1}^{K}\gamma_{ij}\uu_{i,\infty} = 0, \quad \forall i = 1,\ldots, K,\\
            &\sum_{i=1}^{K}\uu_{i,\infty} = 1,
        \end{aligned}
        \right.
    \end{equation*}
    see e.g. \cite[Lemma 1.7]{fellner2017entropy}. Now we consider the relative energy function
    \begin{equation*}
        \mathcal{E}(\uu) = \sum_{i=1}^K\int_{\Omega}\frac{|\uu_i|^2}{\uu_{i,\infty}}dx.
    \end{equation*}
    Thanks to \cite[Lemma 2.3]{fellner2017entropy}, we have
    \begin{align*}
        \frac{d}{dt}\mathcal{E}(\uu) &= -\sum_{i=1}^K\int_{\Omega}D^i(x)|\na \uu_i|^2dx - \sum_{i<j}\int_{\Omega}\varphi(x)(\gamma_{ij}\uu_{j,\infty}+\gamma_{ji}\uu_{i,\infty})\bra{\frac{\uu_i}{\uu_{i,\infty}} - \frac{\uu_j}{\uu_{j,\infty}}}^2dx\\
        &\le -\sum_{i=1}^{K}D_*\int_{\Omega}|\na \uu_i|^2dx \quad (\text{using (\ref{diff_general})})\\
        &\le -\sum_{i=1}^K D_*C_P\int_{\Omega}|\uu_i|^2dx \quad (\text{using Poincar\'e inequality})\\
        &\le -\delta \mathcal{E}(\uu)
    \end{align*}
    for some $\delta>0$, which implies
    \begin{equation*}
        \mathcal{E}(\uu)(t) \le e^{-\delta t}\mathcal{E}(\uu_0) \quad \text{ for all } t>0,
    \end{equation*}
    and therefore, for all $i=1,\ldots, K$,
    \begin{equation*}
        \|\uu_i(t)\|_{L^2(\Omega)}^2 \le Ce^{-\delta t}, \quad \forall t>0.
    \end{equation*}
    The convergence in $L^p(\Omega)$-norm follows from \eqref{ee1} and $L^2(\Omega)$-convergence
    \begin{equation*}
        \|\uu_i(t)\|_{L^p(\Omega)}^p \le \|\uu_i(t)\|_{L^{\infty}(\Omega)}^{p-2}\|\uu_i(t)\|_{L^2(\Omega)}^2,
    \end{equation*}
    thus
    \begin{equation*}
        \|\uu_i(t)\|_{L^p(\Omega)} \le Ce^{-(2/p)t}, \quad \forall t>0.
    \end{equation*}
\end{proof}

\noindent{\bf Reaction-diffusion master equation}


Let $\Omega = (0,1)^n$ and consider the hypercube with edge length $L > 0$ and volume $V = L^n$. Let $N \in \mathbb{N}$ and consider a uniform partition of $V$ into $N^n$ cells of equal volume $w = L^n/N^n$. Define the $(j_1,\ldots,j_n) \in \{1,\ldots,N\}^n$ cell by $(x_{j_1-1},x_{j_1}]\times\cdots \times (x_{j_n-1},x_{j_n}] = ((j_1 - 1)w,j_1w]\times\cdots\times ((j_n - 1)w,j_nw]$. 

On the boundary of $V$, additional cells are added. Molecules in cells $(j_1,\ldots,j_i,1,j_{i+1},\ldots,j_n)$ and $(j_1,\ldots,j_i,N,j_{i+1},\ldots,j_n)$ may diffuse into these boundary cells in which they are absorbed. That is, the number of molecules in these cells are always zero. The total number of cells in the system is then $(N+2)^n$. 

The number of molecules of each species $l\in \{1,\ldots, K\}$ in cell $\mathbf{j} = (j_1,\ldots,j_n)$ at time $t$ is denoted by $\tilde{X}_{\mathbf{j}}^l(t)$. The number of molecules in cells   $(j_1,\ldots,j_i,0,j_{i+1},\ldots,j_n)$ and $(j_1,\ldots,j_i,N+1,j_{i+1},\ldots,j_n)$ are always zero. The total state of the system is denoted by $\tilde{X}(t)$, which is modeled by a continuous-time Markov jump process on $\mathbb{N}^{K(N+2)^n}$.

Suppose that at time $t$, $\tilde{X}_j^l(t) = k_j^l$. Let $m = (m^1,\ldots, m^K)\in \mathbb{N}^{K(N+2)^n}$  and denote the $j$-th unit vector in $\mathbb{R}^{(N+2)^n}$ by $e_j$. The transition rates for the process $\tilde{X}(t)$ are given by 
\begin{equation*}
	q_{k,k+m} = \left\{
    \begin{aligned}
		&\lambda_{j}^{ll'}k_j^l, && m = \mathbf{1}_j^{l'} - \mathbf{1}_j^l,~j \in \mathcal{J},~l,l' = 1,\ldots,K\\
		&D_j^lN^2k_j^l, && m = \mathbf{1}_{j + e_i}^l - \mathbf{1}_j^l,~j \in \mathcal{J},~l = 1,\ldots,K,~i = 1,\ldots,n\\
		&D_j^lN^2k_j^l, && m = \mathbf{1}_{j-e_i}^l - \mathbf{1}_j^l,~j \in \mathcal{J},~l = 1,\ldots,K,~i = 1,\ldots,n\\
		&D_{j+e_i}^lN^2k_{j+e_i}^l, && m = \mathbf{1}_j^l - \mathbf{1}_{j+e_i}^l,~j \in \mathcal{J}, j_{i}+1 \neq N+1,~l = 1,\ldots,K,~i = 1,\ldots,n\\
		&D_{j-e_i}^lN^2k_{j-e_i}^l, && m = \mathbf{1}_{j}^l - \mathbf{1}_{j-e_i}^l,~j \in \mathcal{J},~j_{i}-1 \neq 0,~l = 1,\ldots,K,~i = 1,\ldots,n\\
		&0, && \mathrm{otherwise}.
    \end{aligned}
    \right.
\end{equation*}


Let $\mathcal{J} = \{1,\ldots,N\}^n$. In this setting, the RDME is then given by
\begin{align}
	\frac{\mathrm{d}}{\mathrm{d}t} P(k,t) &= \sum_{l = 1}^K \sum_{j \in \mathcal{J}} \sum_{i=1}^n \left(q_{k-\mathbf{1}_j^l + \mathbf{1}_{j + e_i}^l,k}P(k-\mathbf{1}_{j}^l + \mathbf{1}_{j+e_i}^l,t) - q_{k,k-\mathbf{1}_j^l + \mathbf{1}_{j + e_i}^l} P(k,t)\right.\notag\\
	&\left. ~~~ + q_{k-\mathbf{1}_{j}^l + \mathbf{1}_{j-e_i}^l,k}P(k-\mathbf{1}_{j}^l + \mathbf{1}_{j + e_i}^l,t) - q_{k,k-\mathbf{1}_j^l + \mathbf{1}_{j-e_i}^l}P(k,t)\right)\notag\\
	&~~~ + \sum_{l=1}^K \sum_{j \in \mathcal{J}} q_{k - \mathbf{1}_j^l + \mathbf{1}_{j}^{3-l},k} P(k-\mathbf{1}_{j}^l + \mathbf{1}_j^{3-l},t) - q_{k,k - \mathbf{1}_j^l + \mathbf{1}_j^{3-l}}P(k,t). 
\end{align}
Here $\mathbf{1}_{j'}^{l'} \in \mathbb{N}^{K(N+2)^n}$, $l' \in \{1,\ldots,K\}$, $j'\in \{0,\ldots,N+1\}^n$ satisfies 
\begin{align}
	\mathbf{1}_{j}^l(j,l) = \left\{\begin{array}{ll}
		1, & j = j',~l' = l\\
		0, & j \neq j'~or~l'\neq l
	\end{array}\right.,~j' \in \{0,\ldots,N+1\}^n,~l' \in \{1,\ldots,K\}.
\end{align}


\noindent{\bf Macroscopic limit}: Define the concentration process $C^l_{\mathbf{j}}(t) = \tilde{X}^l_{\mathbf{j}}(t)/w^n$, and $C^l_{\mathbf{j}}(t) = 0$ if $\mathbf{j}$ is the artifical boundary cell. By defining $\mathcal{X}_N = L_2[0,1]^n$ and
\begin{equation*}
    u_l(x,t) = \sum_{\mathbf{j}}C^l_{\mathbf{j}}(t)\mathbf{1}\left\{x\in \prod_{k=1}^n\bigg[\frac{(j_k-1)w}{V}, \frac{j_kw}{V}\bigg) \right\}, \quad l=1,\ldots, K.
\end{equation*}
For $\vartheta > \varrho$, with $\varrho$ in Theorem \ref{ee1}, we define $\tau_{S_{\vartheta}}$ as the first exit time of $u(t)$ from 
\begin{equation*}
    S_{\vartheta} = \left\{u\in \mathcal{X}_N^+: \sup_{0\le x\le 1}\sum_{k=1}^Ku_k(x) \le \vartheta \right\}.
\end{equation*}
Finally, we define the stopped process $\tilde{u}(t)$ as
\begin{equation*}
    \tilde{u}(\cdot,t) = u(\cdot,\min\{t,\tau_{S_{\vartheta}} \}).
\end{equation*}
By repeating the arguments in Section \ref{sec:two_chemicals} we obtain the following result.

\begin{theorem}
    Assume \eqref{diff_general}	and \eqref{reaction_general}, and additionally $D_i \in C^1(\Omega) \cap C(\overline{\Omega})$ for all $i=1,\ldots, K$. Further, assume that the following conditions hold: 
	\begin{enumerate}
		\item[{\normalfont (H1')}] $\|\tilde{u}(0) - \mathbf{u}(0)\| = 0$ in probability as $N,w \rightarrow \infty$;
		\item[{\normalfont (H2')}] and
		\begin{align*}
			\frac{N^2}{w^n} \rightarrow 0 \quad \text{ as } \quad N,w \rightarrow \infty.
		\end{align*}
	\end{enumerate}
    Then, for any $\delta > 0$ and $T \in (0,\infty)$, 
	\begin{align*}
			\sup_{t \in [0,T]} \mathbb{P}(\|\tilde{u}(t) - \mathbf{u}(t)\| > \delta) \rightarrow 0 \quad \text{ as } \quad N,w \rightarrow \infty.
		\end{align*}
\end{theorem}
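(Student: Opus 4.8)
The plan is to transcribe the three-step proof of Theorem~\ref{thrm:conv_smooth} to the present setting; structurally nothing changes, since the $K\times K$ reaction matrix still defines a globally Lipschitz map $\RR(\uu)=\big(\sum_{j=1}^{K}\lambda_{ij}\uu_j\big)_{i=1}^{K}$ with Lipschitz constant controlled by $\max_{i\ne j}\|\lambda_{ij}\|_{\LO{\infty}}$, the column-sum relation \eqref{ee} keeps $\sum_i f_i(\uu)=0$ so that $\uu$ obeys the uniform bound \eqref{ee1}, and the voxel averages \eqref{approx_Dlambda} produce piecewise-constant coefficients $\wh{D}_i,\wh{\lambda}_{ij}$ and discrete operators $\RR_N,\L_N$ with $F=\L_N+\RR_N$. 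The two genuinely new points are that a single voxel now has volume $w^{n}$, which is exactly why the martingale scaling produces $N^{2}/w^{n}\to0$ in (H2') rather than $N^{2}/w\to0$, and that the conclusion is asked uniformly on $[0,T]$, so the $t$-dependence of every error term must be tracked. For the martingale step, put $\tilde z(t)=\tilde u(t)-\tilde u(0)-\int_0^t F(\tilde u(s))\,ds$ as in \eqref{split}. Because the stopped process is confined to $S_\vartheta$ and, for each fixed $N,w$, has only finitely many transition channels with rates bounded in terms of the local mass, $\sup_u\tilde\tau(u)<\infty$ and $\sup_u\tilde\tau(u)\sum_m|m/w^{n}|\,\tilde\sigma(u,m/w^{n})<\infty$, so $\tilde z$ is a mean-zero $P_u$-martingale by \cite[Proposition 2.1]{Kurtz1971limit}. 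Kurtz's \cite[Lemma 2.9]{Kurtz1971limit} then gives, exactly as in \eqref{martingale_inequality},
\begin{equation*}
\E_u\big[\|\tilde z(t)\|^2\big]=\int_0^t\E_u\Big[\sum_m\|m/w^{n}\|^2\,w^{n}f(\tilde u(s),m)\Big]\,ds\le C(n,K)\,\frac{t}{w^{n}}\,\vartheta\Big(\max_{i\ne j}\|\lambda_{ij}\|_{\LO{\infty}}+N^{2}D^{*}\Big),
\end{equation*}
the factor $w^{-n}$ being the inverse voxel volume; by Doob's $L^{2}$-inequality and (H2') this yields $\P\big(\sup_{0\le s\le t}\|\tilde z(s)\|>\delta\big)\to0$, and the same bound applied to the martingale $\tilde z(t)-\tilde z(s)$ gives $\E_u[\|\tilde z(t)-\tilde z(s)\|^2]\le 2A(N,w)(t-s)$ with $A(N,w)\to0$, whence $\E_u\big[\int_0^t\frac{\|\tilde z(t)-\tilde z(s)\|}{t-s}\,ds\big]\le C\sqrt{t\,A(N,w)}$.

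Next, the discrete diffusion operator $\L_N$ is built component by component by summing the one-dimensional second-difference \eqref{discrete_diffusion}--\eqref{def:LN} over the $n$ coordinate directions. Its self-adjointness on $L^{2}(\Omega)$ follows from the same ``discrete integration by parts'', carried out coordinate by coordinate and using that $\tilde u$ vanishes on the absorbing boundary voxels (the Dirichlet condition is essential here). Consequently, as in Lemma~\ref{lem:semigroup}, the semigroup $T_N(t)$ generated by $\L_N$ is an analytic contraction semigroup with $\|T_N(t)\|\le e^{\omega t}$ and $\|T_N(t)\L_N\|\le 1/t$. Finally $\|\RR_N v-\RR v\|\to0$ for each $v$ by definition of $\wh{\lambda}_{ij}$, and $\|(T_N(t)-T(t))v\|\to0$ as $N\to\infty$, uniformly for $t$ in compact sets, this last being the $L^{2}(\Omega)$-convergence of the finite-difference scheme for $\pa_t v-\na\cdot(D_i(x)\na v)=0$ with $C^{1}(\Omega)\cap C(\overline{\Omega})$ coefficients on the cube (e.g.\ \cite{jovanovic1989convergence}); this is the only place the $C^{1}$-hypothesis enters.

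For the comparison, using the $K$-species analogue of the mild formula \eqref{mild_solution} for $\uu$ and $F=\L_N+\RR_N$, decompose $\tilde u-\uu$ exactly as in \eqref{h1}--\eqref{h2} into $\tilde z(t)+(I)+(II)+(III)$, with $(I)=\|\int_0^t\L_N(\tilde u(s)-y(s))\,ds\|$ ($y$ the solution of the analogue of \eqref{def_y}), $(II)=\|T_N(t)\tilde u(0)-T(t)\uu(0)\|$ and $(III)=\|\int_0^t[T_N(t-s)\RR_N(\tilde u(s))-T(t-s)\RR(\uu(s))]\,ds\|$. As in the proof of Theorem~\ref{thrm:conv_smooth}, contractivity gives $(II)\le\|\tilde u(0)-\uu(0)\|+\epsilon_1(t)$; the identity $\int_0^t\L_N(\tilde u-y)\,ds=\int_0^tT_N(t-s)\L_N(\tilde z(s)-\tilde z(t))\,ds+(T_N(t)-\mathrm{Id})\tilde z(t)$ together with $\|T_N(t)\L_N\|\le1/t$ gives $(I)\le\int_0^t\frac{\|\tilde z(t)-\tilde z(s)\|}{t-s}\,ds+2\|\tilde z(t)\|$; and the three-way split of $(III)$ gives $(III)\le c_0\int_0^t\|\tilde u(s)-\uu(s)\|\,ds+\epsilon_2(t)+\epsilon_3(t)$, with $c_0$ depending only on $\max_{i\ne j}\|\lambda_{ij}\|_{\LO{\infty}}$ and $\epsilon_1,\epsilon_2,\epsilon_3\to0$ as $N\to\infty$. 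By the previous two steps each of these terms is moreover bounded on $[0,T]$ independently of $t$ (the martingale terms by Doob and by the uniform mean bound $C\sqrt{T\,A(N,w)}$; the $\epsilon_i$ by the semigroup and reaction convergences together with the uniform bound \eqref{ee1} on $\uu$ and dominated convergence), so, collecting and invoking Gronwall's inequality,
\begin{equation*}
\sup_{t\in[0,T]}\|\tilde u(t)-\uu(t)\|\le e^{c_0T}\Big(\|\tilde u(0)-\uu(0)\|+3\sup_{t\le T}\|\tilde z(t)\|+\sup_{t\le T}\int_0^t\tfrac{\|\tilde z(t)-\tilde z(s)\|}{t-s}\,ds+\sup_{t\le T}\sum_{i=1}^{3}\epsilon_i(t)\Big),
\end{equation*}
whose right-hand side tends to $0$ in probability (with $\|\tilde u(0)-\uu(0)\|\to0$ by (H1')); this proves $\sup_{t\in[0,T]}\P(\|\tilde u(t)-\uu(t)\|>\delta)\to0$.

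The argument is thus a faithful copy of Section~\ref{sec:two_chemicals}, and the genuine work is concentrated in two places. The main obstacle is the second step: re-establishing, in $n$ variables and under Dirichlet data, the self-adjointness and the contraction/regularizing bounds for $\L_N$ and the finite-difference convergence $T_N(t)\to T(t)$ in $L^{2}(\Omega)$, since this is where the one-dimensional operator-theoretic facts of Lemmas \ref{lem:semigroup}--\ref{lem:semigroup_convergence} must be reproved in several variables. The remaining new ingredient is the bookkeeping in the last step ensuring that all error bounds are uniform over $[0,T]$, so that the Gronwall factor $e^{c_0T}$ is harmless. Finally, the dimension $n$ enters the hypotheses only through the scaling $N^{2}/w^{n}$ in (H2'), which emerges from the martingale step.
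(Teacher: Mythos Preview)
Your proposal is correct and follows essentially the same route as the paper, which simply says ``by repeating the arguments in Section~\ref{sec:two_chemicals}'' and then singles out the martingale computation (your first step) as the one place where the higher-dimensional, $K$-species setting forces a change---namely the $w^{n}$ voxel volume and hence the scaling hypothesis (H2'). Your identification of the main obstacle (redoing the operator-theoretic lemmas for $\L_N$ in $n$ variables) and your explicit tracking of the $[0,T]$-uniformity are in fact more detailed than what the paper provides.
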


We remark that the main difference from the case of $K = 2$ chemical species arises in the analog of Lemma~\ref{lem:scale}. In the case of general $K$, for the martingale process $\tilde{z}(t)$ we have
\begin{align}
	\mathbb{E}_{\tilde{u}}[\|\tilde{z}(t)\|^2] &= \int_0^t \mathbb{E}_{\tilde{u}}\left[\tilde{\tau}(\tilde{u}(s))\left(\sum_m \left\|\frac{m}{w^n}\right\|\tilde{\sigma}\left(\tilde{u}(s),\frac{m}{w^n}\right)\right)\right]\mathrm{d}s\notag\\
	&= \int_0^t \mathbb{E}_{\tilde{u}}\left[\sum_m \left\|\frac{m}{w^n}\right\|^2w^n f(\tilde{u}(s),m)\right]\mathrm{d}s\notag\\
	&= \frac{1}{w^nN^n} \int_0^t \mathbb{E}_{\tilde{u}}\left[\sum_{m} |m|^2 f(\tilde{u}(s),m)\right]\mathrm{d}s\notag\\
	&= \frac{1}{w^nN^n} \int_0^t \mathbb{E}_{\tilde{u}}\left[\sum_m |m|^2 f(\tilde{u}(s),m)\right]\mathrm{d}s\notag\\
	&= \frac{1}{w^nN^n} \int_0^t \mathbb{E}_{\tilde{u}}\left[\sum_m |m|^2 q_{k,k+m}(w^n\tilde{u}(s))w^{-n}\right]\mathrm{d}s\notag\\
	&= \frac{1}{w^nN^n} \mathbb{E}_{\tilde{u}}\left[\sum_{l=1}^K \sum_{l'= 1}^K \sum_{j \in \mathcal{J}} \lambda_j^{ll'} \tilde{u}^l_j(s)\right.\notag\\
	&~~~\left. + \sum_{l=1}^K \sum_{j \in \mathcal{J}} \sum_{i=1}^n 2D_j^lN^2\tilde{u}_j^l(s) + D_{j + e_i}^lN^2\tilde{u}_{j+e_i}^l(s) + D_{j - e_i}^lN^2\tilde{u}_{j-e_i}^l(s)\right]\mathrm{d}s\notag\\
	&\leq \frac{1}{w^nN^n} \int_0^t \left(K^2N^n\lambda^*\rho + 4KnN^{2+n}D^*\rho\right)\mathrm{d}s\notag\\
	&= \frac{t\rho}{w^n}\left(K^2\lambda^* + 4KnD^*N^2\right)
\end{align}	
where $\lambda^* = \max_{i,j}\|\lambda_{ij}\|_{L^{\infty}(\Omega)}$. Hence, if we seek $\mathbb{E}[\|\tilde{z}(t)\|^2] \rightarrow 0$ as $N,w \rightarrow 0$, we require $\dfrac{N^2}{w^n} \rightarrow 0$, which corresponds to assumption (H1'). 


\medskip
\noindent{\bf Acknowledgement.}
This research was funded by the Austrian Agency for International Cooperation in Education and Reseach (OeAD-GmbH), project number FR 03/2020. This research was also funded in part by the Campus France PHC AUTRICHE - AMADEUS programme, project number 44081QB.

\medskip
The research was carried out during the visit of the first author to the University of Graz, and of the second author to University of Lyon, and both universities' hospitality is greatly acknowledged.

\medskip

\newcommand{\etalchar}[1]{$^{#1}$}

\end{document}